\newcommand{\qed}{%
  \ifmmode % if math mode, assume display: omit penalty etc.
   \eqno{\qedsymbol}
  \else
    \leavevmode\unskip\penalty9999 \hbox{}\nobreak\hfill\hbox{\qedsymbol}
  \fi
}
\newcommand{\qedsymbol}{\leavevmode\vrule height 1.2ex width 1.1ex depth -.1ex}
\newenvironment{proof}{\begin{trivlist}\item[\hskip%
\labelsep{\bf Proof.\quad}]}%
{\hfill\qed\rm\end{trivlist}}
\newtheorem{theorem}{Theorem}[section]
\newtheorem{corollary}[theorem]{Corollary}
\newtheorem{proposition}[theorem]{Proposition}
\newtheorem{lemma}[theorem]{Lemma}
\mathchardef\emptyset="001F
\font\Bbb=msbm10 at 12 truept
\def\im{\hbox{\rm im}}
\def\dom{\hbox{\rm dom}}
\def\o{\overline}
\def\X{{\mathcal X}}
\def\Fr{{\mathcal F_{\mathcal R}}}
\def\Pr{{\mathcal P}}
\def\F{{\mathcal F}}
\def\E{{\mathcal E}}
\def\L{{\mathcal L}}
\def\R{{\mathcal R}}
\def\C{{\mathcal C}}
\def\D{{\mathcal D}}
\def\FP{\mathcal{FP}}
\def\IFP{\mathcal{IFP}}
\def\RIFP{\mathcal{RIFP}}
\def\PE{\mathcal{PE}}
\def\U{{\mathcal U}}
\def\Sp{{\mathcal E_{\mathcal S}}}
\def\UX{{\mathcal U_{\mathcal X}}}
\def\UPr{{\mathcal U_{\mathcal P}}}
\def\UFP{{\mathcal U_{\mathcal FP}}}
\def\RX{{\mathcal R_{\mathcal X}}}
\def\S{\hbox{\Bbb S}}
\def\cof{\hbox{\rm cof}}
\def\fib{\hbox{\rm fib}}
\def\ret{\hbox{\rm ret}}
\def\Fix{\hbox{\rm Fix}}
\def\tensor{\otimes}
\begin{document}

\title{Weak Factorization Systems for $S-$acts}
\author{Alex Bailey and James Renshaw\\\small Department of Mathematics\\
\small University of Southampton\\
\small Southampton, SO17 1BJ\\
\small England\\
\small Email: alex.bailey@soton.ac.uk\\
\small j.h.renshaw@maths.soton.ac.uk}
\date{January 2013}
\maketitle

\begin{center}
%For John
\end{center}

\begin{abstract}
The concept of a weak factorization system has been studied extensively in homotopy theory and has recently found an application in one of the proofs of the celebrated flat cover conjecture, categorical versions of which have been presented by a number of authors including Rosick\'y~\cite{rosicky-02}. One of the main aims of this paper is to draw attention to this interesting concept and to initiate a study of these systems in relation to flatness of $S-$acts and related concepts.
\end{abstract}

{\bf Key Words} Semigroups, monoids, $S-$acts, covers, weak factorization systems, flatness

{\bf 2010 AMS Mathematics Subject Classification} 20M50.

\section{Introduction}
Let $S$ be a monoid. Throughout, unless otherwise stated, all acts will be right $S-$acts. We refer the reader to~\cite{howie-95} for basic results and terminology in semigroups and monoids and to  \cite{ahsan-08} and \cite{kilp-00} for those concerning acts over monoids.
The object of this paper is in part to draw to the attention of the semigroup community the concept of a {\em weak factorization system}, to recasts in terms of acts over monoids some known results from category theory concerning weak factorization systems and to introduce some new results and examples that are connected with the concepts of covers of $S-$acts, flatness and related ideas.

\smallskip

After some introductory results and comments in section 1 we define the necessary concepts related to weak factorization systems and prove some standard results, many of which are already known to the category theory community. We give some examples of weak factorization systems of $S-$acts in section 3 which have connections with covers and flatness of $S-$acts. In section 4 we show that in some cases we can `generate' weak factorization systems from certain sets of mappings and finish by giving a pointer to a possible application relating to covers of centred $S-$acts.

\smallskip

Let $S$ be a monoid and let $\X$ be a class of $S-$acts. We shall have occasion to consider directed colimits of $S-$acts where the index set is regarded as an ordinal. For details of directed colimits in general see~\cite{ahsan-08}, \cite{bailey-12} or \cite{kilp-00}. We shall (informally) consider a {\em class} as a collection of {\em sets}, or viewed another way, a set is a class that is a member of another class. We shall consider an {\em ordinal} as a transitive set well-ordered by $\in$. In particular, if $\alpha$ is an ordinal then $\alpha=\{\beta: \beta\text{ is an ordinal and }\beta<\alpha\}$.
We refer the reader to~\cite{jeck-06} for a more formal treatment of axiomatic set theory and ordinals and a more detailed discussion on the differences between classes and sets.

Let $\lambda$ be an infinite ordinal. By a {\em$\lambda-$sequence} we mean a directed system of $S-$acts and $S-$maps $(A_\alpha,\phi_{\alpha,\beta}:A_\alpha\to A_\beta)_{\alpha\le\beta<\lambda}$ with a directed colimit $(A_\lambda,\phi_{\alpha})_{\alpha<\lambda}$ and such that for every limit ordinal $\gamma\le\lambda$, $(A_\gamma,\phi_{\alpha,\gamma})_{\alpha<\gamma}$ is the directed colimit of the directed system $(A_\alpha,\phi_{\alpha,\beta})_{\alpha\le\beta<\gamma}$.
Let $\C$ be a class of $S-$maps and let $(A_\alpha,\phi_{\alpha,\beta})$ be a $\lambda-$sequence in which for $\beta<\beta+1<\lambda$, $\phi_{\beta,\beta+1}\in\C$. Then the map $\phi_{0,\lambda}:A_0\to A_\lambda$ is called the {\em transfinite composition} of the maps $\{\phi_{\alpha,\beta}|\alpha\le\beta<\lambda\}$.

\medskip

Let $\gamma$ be a cardinal. An ordinal $\lambda$ is said to be {\em $\gamma-$filtered} if it is a limit ordinal and if $A\subseteq\lambda$ and $|A|\le\gamma$ then $\sup(A)<\lambda$. For example, if $\gamma$ is finite then $\omega$ is $\gamma-$filtered. As a generalisation, notice that for a given infinite cardinal $\gamma$ then $\gamma^+$, the successor cardinal to $\gamma$, is $\gamma-$filtered and so $\gamma-$filtered ordinals exist for each cardinal $\gamma$. To see that we require $\lambda=\gamma^+$ here rather than $\gamma$, consider the well-known example $\gamma = \aleph_\omega$ and let $A=\{\aleph_n | n<\omega\}$. Then $A\subseteq\gamma$ but $\sup(A)=\gamma$.

%????Let $\gamma=\aleph_\omega$ and let $A=\{\aleph_n|n\in\n\}$. Then $|A|=\omega<\gamma$ but $\sup(A)=\gamma$.

%
%Let $A$ be an $S-$act, let $\C$ be a collection of $S-$morphisms and  and let $\kappa$ be a cardinal. Say that $A$ is {\em $\kappa-$small relative to $\C$} if for all $\kappa-$filtered ordinals $\lambda$ and all $\lambda-$sequences
%$$
%A_0\to A_1\to\ldots \to A_\beta\to\ldots
%$$
%such that each map $A_\beta\to A_{\beta+1}$ is in $\C$ for $\beta+1<\lambda$ then $\text{\rm Hom}_S(A,A_\lambda)$ is the colimit of the induced system $(\text{\rm Hom}_S(A,A_\beta))_{\beta<\lambda}$. Say that $A$ is {\em small} if it is $\kappa-$small for some cardinal $\kappa$.

\smallskip

The following rather technical result, which will be useful later, is effectively a `transfinite' version of \cite[Lemma 2.3]{bailey-12}.
\begin{lemma}\label{small-lemma}
Let $S$ be a monoid and let $A$ be an $S$-act. Let $\gamma\ge\max\{|S|,|A|\}$ be an infinite cardinal and let $\lambda$ be a $\gamma-$filtered ordinal. Suppose that $(A_\alpha,\phi_{\alpha,\beta})$ is a $\lambda-$sequence of $S-$acts with a directed colimit $(A_\lambda,\phi_{\alpha})_{\alpha<\lambda}$ and let $f:A\to A_\lambda$ be an $S-$map. Then there exists $\delta<\lambda$ and an $S-$map $g:A\to A_\delta$ such that $f=\phi_\delta g$.
\end{lemma}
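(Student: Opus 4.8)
The plan is to use the standard set-level description of a directed colimit of $S$-acts, in which every element of $A_\lambda$ has the form $\phi_\alpha(x)$ for some $\alpha<\lambda$ and $x\in A_\alpha$, and in which $\phi_\alpha(x)=\phi_\beta(y)$ holds if and only if $\phi_{\alpha,\mu}(x)=\phi_{\beta,\mu}(y)$ for some $\mu\ge\alpha,\beta$. I would then lift $f$ to an early stage of the sequence in two passes: first as a map of underlying sets, and then repair it to an honest $S$-map, invoking the $\gamma$-filtering of $\lambda$ at each pass to keep the relevant stage strictly below $\lambda$. This is essentially the transfinite analogue of \cite[Lemma 2.3]{bailey-12}.

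For the first pass, for each $a\in A$ I would choose $\alpha_a<\lambda$ and $x_a\in A_{\alpha_a}$ with $\phi_{\alpha_a}(x_a)=f(a)$. Because $|A|\le\gamma$ and $\lambda$ is $\gamma$-filtered, the set $\{\alpha_a:a\in A\}$ has supremum $\delta_0<\lambda$; pushing every representative forward via $y_a=\phi_{\alpha_a,\delta_0}(x_a)$ then yields a map of sets $g_0:A\to A_{\delta_0}$ with $\phi_{\delta_0}g_0=f$.

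The hard part is that this $g_0$ need only be a set map, not an $S$-map, and this is exactly where the hypothesis $\gamma\ge|S|$ is needed. For each pair $(a,s)\in A\times S$ one has $\phi_{\delta_0}(g_0(as))=f(a)s=\phi_{\delta_0}(g_0(a)s)$, so the colimit description supplies some $\mu_{a,s}\ge\delta_0$ with $\phi_{\delta_0,\mu_{a,s}}(g_0(as))=\phi_{\delta_0,\mu_{a,s}}(g_0(a)s)$. Since $|A\times S|\le\gamma$, a second appeal to $\gamma$-filtering gives $\delta=\sup_{a,s}\mu_{a,s}<\lambda$ with $\delta\ge\delta_0$, and I would set $g=\phi_{\delta_0,\delta}g_0:A\to A_\delta$. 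A routine check, using that each connecting map is itself an $S$-map, then shows $g(as)=g(a)s$, so that $g$ is an $S$-map, while $\phi_\delta g=\phi_{\delta_0}g_0=f$. The twofold use of $\gamma$-filtering — once on the $|A|\le\gamma$ element-witnesses and once on the $|A\times S|\le\gamma$ equivariance-witnesses — is precisely what the assumption $\gamma\ge\max\{|S|,|A|\}$ is there to guarantee, and I expect the bookkeeping of these witnesses, together with the standard fact that directed colimits of $S$-acts are computed on underlying sets, to be the only real content of the argument.
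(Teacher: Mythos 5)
Your proposal is correct and follows essentially the same two-pass argument as the paper: first use $\gamma$-filteredness on the $|A|$-many witnesses to factor $f$ through some stage as a set map, then use it again on the $|A\times S|$-many equivariance witnesses to push forward to a stage where the factorization is an $S$-map. The only cosmetic difference is that the paper outsources the set-level description of the directed colimit to a cited result rather than spelling it out.
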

\begin{proof}
Note that by~\cite[Theorem 2.2]{bailey-12}, for each $a\in A$ there exists $\beta_a$ such that $f(a)\in \im(\phi_{\beta_a})$. Then $|\{\beta_a:a\in A\}|\le|A|\le\gamma$ and so $\alpha=\sup\{\beta_a:a\in A\}<\lambda$. Hence $\im(f)\subseteq\im(\phi_\alpha)$ and  $f$ will factor through a function $g:A\to A_\alpha$. Now again by~\cite[Theorem 2.2]{bailey-12} it follows that for each pair $(a,s)\in A\times S$ there exists $\alpha\le \beta_{a,s}<\lambda$ such that $\phi_{\alpha,\beta_{a,s}}(g(a)s) = \phi_{\alpha,\beta_{a,s}}(g(as))$. Let $\delta=\sup\{\beta_{a,s}|(a,s)\in A\times S\}$. Since $|\{\beta_{a,s}|(a,s)\in A\times S\}|\le\gamma$ then $\delta<\lambda$ and $f$ factors through an $S-$map $A\to A_\delta$ as required.
\end{proof}

\medskip

Let $f:X\to Y$ be an $S-$monomorphism and consider the right $S-$congruence $\tau$ on $Y$ given by
$
\tau=\im f\times\im f\cup 1_Y.
$
Let $Y/X=Y/\tau$ and denote the element $y\tau$ by $\o y$.

\begin{lemma}[{\cite[Lemma 2.6]{renshaw-86}}]\label{rees-composition-lemma}
Let $S$ be a monoid and let $f:X\to Y$ and $g:Y\to Z$ be $S-$monomorpisms. Then $\o g: Y/X\to Z/Y$ given by $\o g(\o y)= \o{g(y)}$ is an $S-$monomorphism and $Z/X\cong(Z/Y)/(Y/X)$.
\end{lemma}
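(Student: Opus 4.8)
The plan is to treat $Y/X$, $Z/Y$ and $Z/X$ as Rees quotients of a common act and to reduce the whole statement to bookkeeping with three congruences: $\tau_X=\im f\times\im f\cup 1_Y$ on $Y$, the congruence $\sigma=\im g\times\im g\cup 1_Z$ on $Z$ defining $Z/Y$, and the congruence $\rho=\im(gf)\times\im(gf)\cup 1_Z$ on $Z$ defining $Z/X$. First I would check that $\o g$ is well defined: if $\o{y_1}=\o{y_2}$ in $Y/X$ then either $y_1=y_2$, so that $g(y_1)=g(y_2)$, or else $y_1,y_2\in\im f$, in which case $g(y_1),g(y_2)\in g(\im f)=\im(gf)$ and hence $\o{g(y_1)}=\o{g(y_2)}$. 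That $\o g$ is an $S$-map is then immediate, since $\o g(\o y\,s)=\o g(\o{ys})=\o{g(ys)}=\o{g(y)s}=\o{g(y)}\,s$, using only that $g$ is an $S$-map and that each Rees quotient carries the induced right $S$-action.

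The substantive point is injectivity of $\o g$, and this is exactly where the hypotheses that $f$ and $g$ are monomorphisms are used. Suppose $\o{g(y_1)}=\o{g(y_2)}$. By the definition of $\rho$ this means that either $g(y_1)=g(y_2)$ or both $g(y_1)$ and $g(y_2)$ lie in $\im(gf)$. In the first case injectivity of $g$ gives $y_1=y_2$. In the second case I would invoke the key observation that, precisely because $g$ is injective, $g(y)\in\im(gf)=g(\im f)$ holds if and only if $y\in\im f$; hence $y_1,y_2\in\im f$ and so $\o{y_1}=\o{y_2}$ in $Y/X$. Establishing and correctly applying this equivalence $g(y)\in\im(gf)\Leftrightarrow y\in\im f$ is the one place where injectivity is indispensable, and I expect it to be the main obstacle.

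For the isomorphism I would realise both sides as quotients of $Z$ and compare the resulting congruences. By the image calculation above, the image of $\o g$ inside $Z/X$ consists exactly of the $\rho$-classes of the elements of $\im g$; collapsing this image therefore collapses, in one further step, all of $\im g$, so that the iterated quotient $(Z/X)/(Y/X)$ is simply $Z$ with $\im g$ collapsed, namely $Z/Y$. Since $\im(gf)\subseteq\im g$ we have $\rho\subseteq\sigma$, so the canonical map $Z\to Z/Y$ factors through $Z\to Z/X$, and the two constructions visibly yield quotients of $Z$ by the same congruence $\sigma$. The induced assignment $\o z\mapsto\o z$ is then a well-defined bijective $S$-map, giving $(Z/X)/(Y/X)\cong Z/Y$; the only thing to verify is that the second quotient introduces no identifications beyond those of $\sigma$, which is exactly the content of the image description established above. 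I would present this either through the comparison of congruences just sketched or, equivalently, by writing down mutually inverse $S$-maps and checking that each is well defined on classes.
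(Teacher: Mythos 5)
Your argument is correct, and it is the standard third-isomorphism bookkeeping; note that the paper offers no proof of this lemma to compare against (it is quoted from \cite[Lemma 2.6]{renshaw-86}). The one thing you should make explicit rather than silently absorb is that what you have actually proved is that $\o g\colon Y/X\to Z/X$ is a monomorphism and that $Z/Y\cong (Z/X)/(Y/X)$: your well-definedness and injectivity checks are carried out against the congruence $\rho=\im(gf)\times\im(gf)\cup 1_Z$ defining $Z/X$, and your final step identifies $(Z/X)/(Y/X)$ with $Z/\sigma=Z/Y$. That is the correct form of the result, and the one consistent with its later use (e.g.\ in Lemma~\ref{rees-stable-lemma1}, where $\o g$ goes $B/A\to C/A$). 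The statement as printed, with codomain $Z/Y$ and conclusion $Z/X\cong(Z/Y)/(Y/X)$, is a transposition slip: the map $Y/X\to Z/Y$, $\o y\mapsto \o{g(y)}$, sends every class into the single collapsed class of $\im g$ and so is never a monomorphism unless $Y=\im f$, and the right-hand quotient $(Z/Y)/(Y/X)$ is not even formed from an embedding. Within your corrected reading the proof is complete: injectivity of $g$ gives the key equivalence $g(y)\in\im(gf)\Leftrightarrow y\in\im f$, the image of $\o g$ is $\rho$-saturated, and the comparison $\rho\subseteq\sigma$ of congruences on $Z$ yields the isomorphism. (A minor quibble: injectivity of $f$ is not in fact needed for any of these steps; it serves only to identify $X$ with $\im f$ so that $Y/X$ deserves its name.)
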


Recall (\cite{renshaw-02}) that a map $f:A\to B$ is said to be {\em stable} if for every left $S-$map $g:X\to Y$, whenever $b\tensor g(x)=f(a)\tensor y$ in $B\tensor_SY$ then there exists $a'\in A, x'\in X$ such that $b\tensor g(x)=f(a')\tensor g(x')$. Recall also that an $S-$act $A$ is {\em flat} if for all left $S-$monomorphisms $g:X\to Y$ the induced map $A\tensor_S X\to A\tensor_S Y$ is one to one.

\begin{lemma}[{Cf.~\cite[Lemma 2.1]{renshaw-02}}]\label{rees-flat-lemma1}
Let $S$ be a monoid and let $f:X\to Y$ be an $S-$monomorphism. If $Y/X$ is flat then $f$ is stable. If $Y$ is flat and $f$ is stable then $Y/X$ is flat.
\end{lemma}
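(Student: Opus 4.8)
The plan rests on one structural fact. Since $-\tensor_S V$ is a left adjoint it preserves the coequalizer defining the Rees quotient, so for every left $S$-act $V$ the canonical surjection $q_V\colon Y\tensor_S V\to(Y/X)\tensor_S V$ exhibits $(Y/X)\tensor_S V$ as the quotient of $Y\tensor_S V$ by the equivalence relation $\rho_V$ generated by the pairs $(f(x)\tensor v,f(x')\tensor v)$ for $x,x'\in X$ and $v\in V$. I record two consequences. First, every nontrivial $\rho_V$-class is contained in the image of $X\tensor_S V$ in $Y\tensor_S V$. Second, the constant map $Y/X\to\Theta$ onto the one-element act $\Theta=\{\ast\}$ splits the inclusion $\ast\mapsto z$ of the collapsed point $z=\o{f(x)}$, so $\Theta\tensor_S V$ sits inside $(Y/X)\tensor_S V$ as a retract; applying this retraction to an equation lets me replace a troublesome free variable $v$ by an element of the form $g(u)$.

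For the first statement I would reduce stability to the case of a monomorphism $g$: writing an arbitrary left $S$-map as an epimorphism onto its image followed by the inclusion $\iota\colon\im g\to V$, stability for $\iota$ yields stability for $g$, because every element of $\im g$ has the form $g(u')$. So let $g\colon U\to V$ be a left $S$-monomorphism and suppose $y\tensor g(u)=f(x)\tensor v$ in $Y\tensor_S V$. Pushing this into $(Y/X)\tensor_S V$ gives $\o y\tensor g(u)=z\tensor v$; applying the retraction onto $\Theta\tensor_S V$ gives $\ast\tensor g(u)=\ast\tensor v$, whence $z\tensor v=z\tensor g(u)$ back in $(Y/X)\tensor_S V$ and therefore $\o y\tensor g(u)=z\tensor g(u)$. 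The two sides are the images of $\o y\tensor u$ and $z\tensor u$ under the map induced by $g$, so flatness of $Y/X$ forces $\o y\tensor u=z\tensor u$ in $(Y/X)\tensor_S U$. Translating back through $q_U$ places $y\tensor u$ in the image of $X\tensor_S U$, i.e. $y\tensor u=f(x')\tensor u'$, and applying $g$ gives $y\tensor g(u)=f(x')\tensor g(u')$, as required.

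For the second statement, fix a left $S$-monomorphism $g\colon U\to V$ and suppose two elements of $(Y/X)\tensor_S U$ agree after applying $g$. Lifting them to $p=y_p\tensor u_p$ and $q=y_q\tensor u_q$ in $Y\tensor_S U$ via $q_U$, the hypothesis makes $y_p\tensor g(u_p)$ and $y_q\tensor g(u_q)$ related by $\rho_V$, and the first consequence above splits this into two cases. If they are genuinely equal in $Y\tensor_S V$, then flatness of $Y$ makes the induced map $Y\tensor_S U\to Y\tensor_S V$ injective, so $p=q$. Otherwise both lie in the image of $X\tensor_S V$; the equation $y_p\tensor g(u_p)=f(x)\tensor v$ is exactly the hypothesis of stability, so stability rewrites it as $y_p\tensor g(u_p)=f(x_p')\tensor g(u_p')$, and cancelling $g$ by flatness of $Y$ shows $p=f(x_p')\tensor u_p'$ lies in the image of $X\tensor_S U$, and similarly for $q$.

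The main obstacle is to finish this second case. Once $p$ and $q$ are known to come from $X\tensor_S U$, I still have to show that $q_U(p)=q_U(q)$, that is, that the collision of their $g$-images inside the zero part $\Theta\tensor_S V$ descends to a collision inside $\Theta\tensor_S U$. This is the one step that is not a formal consequence of the coequalizer description, and it is where I expect the precise shape of the stability condition and the behaviour of the tensor product over the collapsed point to be essential; I anticipate having to unwind the tossing scheme that witnesses the equality in $V$ and push it down along the monomorphism $g$, rather than relying on any purely diagrammatic argument.
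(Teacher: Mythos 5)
Your argument for the first implication is correct and complete (modulo the trivial case $X=\emptyset$, where stability is vacuous): the reduction to monomorphisms, the passage through the retraction $(Y/X)\tensor_S V\to\Theta\tensor_S V$ and back along its section, and the observation that every nontrivial class of $\rho_U$ lies in the image of $X\tensor_S U$ are all sound. Note that the paper itself offers no proof of this lemma --- it is imported from \cite{renshaw-02} --- so there is nothing internal to compare against; this half of your argument stands on its own.

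The gap you flag in the second implication is genuine, and it is worse than a technical loose end: the step you are missing is exactly the flatness of the one-element act $\Theta$, and that does not follow from the stated hypotheses. Indeed $\{z\}\cong\Theta$ is a retract of $Y/X$ (for $X\ne\emptyset$), so by Lemma~\ref{flat-retract-lemma1} flatness of $Y/X$ already forces flatness of $\Theta$; and your unfinished step --- that $\ast\tensor g(u_p')=\ast\tensor g(u_q')$ in $\Theta\tensor_S V$ should imply $\ast\tensor u_p'=\ast\tensor u_q'$ in $\Theta\tensor_S U$ --- is precisely that flatness. But $\Theta$ need not be flat even when $Y$ is flat and $f$ is stable: take $S$ the free monoid on $\{a,b\}$, $X=Y=S$ and $f=1_S$ (which is a stable monomorphism with flat codomain), so that $Y/X\cong\Theta$; for the left ideal $U=Sa\cup Sb\hookrightarrow S$ one has $\Theta\tensor_S U\cong U/{\approx}$ with $\approx$ generated by $u\approx su$, on which the last letter of a word is an invariant, so $\ast\tensor a\ne\ast\tensor b$ there, while $\Theta\tensor_S S$ is a single point. (Taking $Y=S\dot\cup S$ and $X$ one summand gives the same failure with a proper subact.) So no unwinding of tossing schemes will close the gap: the statement as transcribed here needs flatness of $\Theta$ (equivalently, of the zero of $Y/X$) as an additional hypothesis, or else a different reading of the definitions in the cited source. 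Your case analysis up to that point --- Case 1 via flatness of $Y$, and the use of stability plus flatness of $Y$ to pull $p$ and $q$ back into the image of $X\tensor_S U$ --- is correct; you have in fact isolated exactly where the content (and the difficulty) of the claim lies.
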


The following is easy to show
\begin{lemma}\label{flat-retract-lemma1}
Let $S$ be a monoid and let $X$ be a retract of a flat $S-$act $Y$. Then $X$ is flat.
\end{lemma}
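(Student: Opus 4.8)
The plan is to exploit the functoriality of the tensor product together with the retraction. Since $X$ is a retract of $Y$, there are $S-$maps $\iota\colon X\to Y$ and $r\colon Y\to X$ with $r\iota=1_X$. First I would fix an arbitrary left $S-$monomorphism $g\colon M\to N$ and aim to show that the induced map $1_X\tensor g\colon X\tensor_S M\to X\tensor_S N$ is injective, which is precisely what flatness of $X$ requires.

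The key observation is that applying the functor $-\tensor_S M$ to the identity $r\iota=1_X$ yields $(r\tensor 1_M)(\iota\tensor 1_M)=1_{X\tensor_S M}$, so that $X\tensor_S M$ is a retract of $Y\tensor_S M$, and likewise $X\tensor_S N$ is a retract of $Y\tensor_S N$. Moreover, naturality of the induced maps gives the commuting relation
\[
(\iota\tensor 1_N)(1_X\tensor g)=\iota\tensor g=(1_Y\tensor g)(\iota\tensor 1_M).
\]

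The heart of the argument is then a short diagram chase. Suppose $u,v\in X\tensor_S M$ satisfy $(1_X\tensor g)(u)=(1_X\tensor g)(v)$. Applying $\iota\tensor 1_N$ and using the relation above, I obtain $(1_Y\tensor g)(\iota\tensor 1_M)(u)=(1_Y\tensor g)(\iota\tensor 1_M)(v)$. Since $Y$ is flat and $g$ is a left $S-$monomorphism, $1_Y\tensor g$ is one to one, whence $(\iota\tensor 1_M)(u)=(\iota\tensor 1_M)(v)$. Applying $r\tensor 1_M$ and using $(r\tensor 1_M)(\iota\tensor 1_M)=1_{X\tensor_S M}$ then forces $u=v$, so $1_X\tensor g$ is injective.

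Since $g$ was arbitrary, $X$ is flat. I do not expect a serious obstacle here: the whole argument is formal, resting only on the functoriality and naturality of $-\tensor_S M$ and $X\tensor_S-$. The single point warranting a line of care is the commutativity of the relevant squares in the category of $S-$acts, but this is immediate from the bifunctoriality of the tensor product, since on generators both composites send $x\tensor m$ to $\iota(x)\tensor g(m)$ regardless of the order in which $\iota$ and $g$ are applied.
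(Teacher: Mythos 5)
Your argument is correct and complete: the retract--flatness argument via functoriality of the tensor product (push into $Y\tensor_S M$ with $\iota\tensor 1$, use injectivity of $1_Y\tensor g$, retract back with $r\tensor 1$) is the standard proof, and it is evidently what the authors have in mind, since the paper states the lemma with only the remark that it ``is easy to show'' and supplies no proof of its own.
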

%\begin{proof}
%Suppose that $f:X\to Y$ and $g:Y\to X$ are such that $gf=1_X$. If $\lambda : A\to B$ is a left $S-$map and if $x\tensor\lambda(a)=x'\tensor\lambda(a')$ in $X\tensor_SB$ then $f(x)\tensor\lambda(a)=f(x')\tensor\lambda(a')$ in $Y\tensor_SB$ and so $f(x)\tensor a=f(x')\tensor a'$ in $Y\tensor_SA$ since $Y$ is flat. Hence $x\tensor a=x'\tensor a'$ in $X\tensor_SA$ as required.
%\end{proof}

\begin{lemma}[{\cite[Lemma 1.2]{renshaw-91}}]\label{rees-stable-lemma1}
Let $S$ be a monoid and let $f:A\to B$ be an $S-$monomorphism and let $g:B\to C$ be a stable monomorphism. Then $\o g:B/A\to C/A$ is a stable monomorphism.
\end{lemma}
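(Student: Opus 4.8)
The plan is to deal with the two assertions separately: that $\o g$ is a monomorphism, which is essentially formal, and that it is stable, which is where the hypothesis on $g$ is used. For the monomorphism part I would argue directly from the definition of the Rees congruence: if $\o{g(b_1)}=\o{g(b_2)}$ in $C/A$ then either $g(b_1)=g(b_2)$, so $b_1=b_2$ since $g$ is injective, or both $g(b_1),g(b_2)$ lie in $\im(gf)$; in the latter case, since $g$ is injective and $\im(gf)=g(\im f)$, both $b_1,b_2$ lie in $\im f$, whence $\o{b_1}=\o{b_2}$ in $B/A$. So $\o g$ is injective, and it is clearly an $S-$map.

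For stability, fix a left $S-$map $h\colon X\to Y$ and suppose $\o c\tensor h(x)=\o g(\o b)\tensor y=\o{g(b)}\tensor y$ in $(C/A)\tensor_S Y$. The idea is to transport this equation up to $C\tensor_S Y$, apply stability of $g$ there, and push the result back down. The key technical step is to understand the map $\o q=q\tensor\mathrm{id}_Y\colon C\tensor_S Y\to(C/A)\tensor_S Y$ induced by the quotient $q\colon C\to C/A$. I would obtain this by observing that (for $A\neq\emptyset$) $C/A$ is the pushout of $\Theta\leftarrow A\to C$, where $\Theta$ is the one-element act; since $-\tensor_S Y$ is a left adjoint it preserves this pushout, so $(C/A)\tensor_S Y$ is the pushout of $\Theta\tensor_S Y\leftarrow A\tensor_S Y\to C\tensor_S Y$. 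Because $A\neq\emptyset$ the leg $A\tensor_S Y\to\Theta\tensor_S Y$ is surjective, so the pushout is simply a quotient of $C\tensor_S Y$: concretely $\o q$ is surjective and its kernel $\theta$ is the equivalence relation generated by pairs $\big(gf(a)\tensor z,\ gf(a')\tensor z'\big)$ with $a,a'\in A$ and $z,z'\in Y$. The crucial feature is that in every such generating pair both elements have their left-hand tensor factor in $\im(gf)\subseteq\im g$.

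With this description in hand the argument is short. Since $\o q(c\tensor h(x))=\o q(g(b)\tensor y)$, the two elements are joined by a finite $\theta-$chain. If the chain is trivial then $c\tensor h(x)=g(b)\tensor y$ already in $C\tensor_S Y$; otherwise, following the chain from $c\tensor h(x)$ up to the first genuine identification, we reach an element of the form $gf(a)\tensor z$, so that $c\tensor h(x)=g(f(a))\tensor z$ in $C\tensor_S Y$. In either case $c\tensor h(x)=g(b_0)\tensor w$ in $C\tensor_S Y$ for some $b_0\in B$ (namely $b_0=b$ or $b_0=f(a)$) and some $w\in Y$. Now stability of $g$ applied to this equation yields $b'\in B$ and $x'\in X$ with $c\tensor h(x)=g(b')\tensor h(x')$ in $C\tensor_S Y$; applying $\o q$ gives $\o c\tensor h(x)=\o{g(b')}\tensor h(x')=\o g(\o{b'})\tensor h(x')$, which is exactly what stability of $\o g$ requires.

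I expect the main obstacle to be the identification of the kernel of $\o q$, that is, making precise the claim that equalities in $(C/A)\tensor_S Y$ which are not already equalities in $C\tensor_S Y$ can only be created by the collapsed subact $\im(gf)$. This can be established either through the pushout-along-a-surjection description above or, more hands-on, by applying the standard combinatorial description of equality in a tensor product (see~\cite{kilp-00}) in $(C/A)\tensor_S Y$ and lifting the resulting scheme term by term to $C\tensor_S Y$, the only delicate steps being those in which a factor equals the collapsed point. The degenerate case $A=\emptyset$ should be disposed of first, since then $B/A=B$, $C/A=C$ and $\o g=g$ is stable by hypothesis.
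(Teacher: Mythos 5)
The paper does not actually prove this lemma; it imports it by citation from \cite{renshaw-91}, so there is no in-paper argument to compare yours against. Judged on its own terms, your proof is correct. The monomorphism part is fine, and the stability part rests on a sound strategy: realise $C/A$ as the pushout of $\Theta\leftarrow A\xrightarrow{gf}C$, use the fact that $-\tensor_S Y$ is a left adjoint to present $(C/A)\tensor_S Y$ as a quotient of $C\tensor_S Y$, lift the hypothesised equality along $\o q$, observe that the first nontrivial link of the identifying chain forces $c\tensor h(x)=gf(a)\tensor z$ in $C\tensor_S Y$ for some $a\in A$, $z\in Y$, and then invoke stability of $g$ before pushing back down. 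The one inaccuracy is your description of $\ker(\o q)$: it is \emph{not} generated by all pairs $\bigl(gf(a)\tensor z,\ gf(a')\tensor z'\bigr)$, but only by those with $\theta\tensor z=\theta\tensor z'$ in $\Theta\tensor_S Y$ (the one-element act $\Theta$ does not kill the $Y$-coordinate: $\Theta\tensor_S Y$ is $Y$ modulo the relation generated by $y\sim sy$, which can have many classes, e.g.\ when $Y$ decomposes as a coproduct). Fortunately the error is in the harmless direction -- your relation $\theta$ contains the true kernel, and your chain argument only uses the inclusion $\ker(\o q)\subseteq\theta$ together with the fact that every generating pair has both members representable with left factor in $\im(gf)$ -- so the proof survives; you should just weaken ``its kernel is'' to ``its kernel is contained in''. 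The treatment of the case $A=\emptyset$ and the final application of the definition of stability (with an arbitrary left $S-$map $h$, not just a monomorphism) are both as they should be.
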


It is well-known that if
$$
\begin{tikzpicture}[description/.style={fill=white,inner sep=2pt}]
\matrix (m) [matrix of math nodes, row sep=3em,
column sep=2.5em, text height=1.5ex, text depth=0.25ex]
{A&C\\ B& P\\ };
\path[->,font=\scriptsize]
(m-1-1) edge node[auto,left] {$f$} (m-2-1)
(m-1-2) edge node[auto,right] {$g$} (m-2-2)
(m-2-1) edge node[auto,below] {$v$} (m-2-2)
(m-1-1) edge node[auto,above] {$u$} (m-1-2);
\end{tikzpicture}
$$
is a pushout diagram then $P\cong(B\dot\cup C)/\rho$ where $\rho=\{(f(a),u(a)):a\in A\}^\sharp$ and $v(b)=b\rho, g(c)=c\rho$. Notice then that if $p\in P$ then either $p\in\im(v)$ or $p\in\im(g)$. Notice also that if $f$ is a monomorphism and if $v(b)=v(b')$ then either $b=b'$ or there exists $a,a'\in A$ such that $b=f(a), u(a)=u(a'), f(a')=b$. We shall make use of this fact later.

\medskip

Let $S$ be a monoid, let $A$ be an $S-$act and let $\X$ be a class of $S-$acts closed under isomorphisms. By an $\X$-{\em precover} of $A$ we mean an $S-$map $g: P\to A$ for some $P\in \X$ such that
for every $S-$map $g':P'\to A$, for $P'\in \X$, there exists an $S-$map $f:P'\to P$ with $g'=gf$.
$$
\begin{tikzpicture}[description/.style={fill=white,inner sep=2pt}]
\matrix (m) [matrix of math nodes, row sep=3em,
column sep=2.5em, text height=1.5ex, text depth=0.25ex]
{P & A\\ 
&P'\\};
\path[->,font=\scriptsize]
(m-2-2) edge node[auto,below left] {$f$} (m-1-1)
(m-2-2) edge node[auto, right] {$g'$} (m-1-2)
(m-1-1) edge[->] node[auto,above] {$g$} (m-1-2);
\end{tikzpicture}
$$

If in addition the precover satisfies the condition that each $S-$map $f:P\to P$ with $gf=g$ is an isomorphism, then we shall call it an {\em $\X-$cover}. We shall frequently identify the (pre)cover with its domain. For more details of covers and precovers of acts we refer the reader to \cite{bailey-12} and \cite{bailey-13}.

\section{Weak Factorization Systems}

Much has been written in recent years on weak factorization systems, mostly in more general categorical terms and some without explicit proof. As might be expected, notation and terminology used seem to vary widely. For background on the ideas contained in this section see for example ~\cite{beke-00},~\cite{hovey-88} and~\cite{rosicky-02}.

Let $\X$ be a class of $S-$acts closed under isomorphisms. Let $f:A\to B$ and $g:C\to D$ be $S-$maps such that given any commutative square of $S-$maps
$$
\begin{tikzpicture}[description/.style={fill=white,inner sep=2pt}]
\matrix (m) [matrix of math nodes, row sep=3em,
column sep=2.5em, text height=1.5ex, text depth=0.25ex]
{A&C\\ B& D\\ };

\path[->,font=\scriptsize]
(m-1-1) edge node[auto,left] {$f$} (m-2-1)
(m-1-2) edge node[auto,right] {$g$} (m-2-2)
(m-2-1) edge node[auto,below] {$v$} (m-2-2)
(m-1-1) edge node[auto,above] {$u$} (m-1-2);
\end{tikzpicture}
$$
there exists an $S-$map $h:B\to C$ such that $hf=u$ and $gh=v$. In this case we say that $g$ has the {\em right lifting property} with respect to $f$ and that $f$ has the {\em left lifting property} with respect to $g$. Let $\C$ be a class of $S-$maps and let
$$
\C^\Box=\{g | g\text{ has the right lifting property with respect to each }f\in\C\}
$$
$$
{}^\Box\C=\{f | f\text{ has the left lifting property with respect to each }g\in\C\}
$$
\begin{lemma}\label{box-algebra-lemma}
Let $S$ be a monoid and let $\C\subseteq\D$ be classes of $S-$maps. Then
\begin{enumerate}
\item $\C\subseteq\left({}^\Box\C\right)^\Box$ and $\C\subseteq{}^\Box\left(\C^\Box\right)$,
\item ${}^\Box\D\subseteq{}^\Box\C$ and $\D^\Box\subseteq\C^\Box$,
\item ${}^\Box\C={}^\Box\left(\left({}^\Box\C\right)^\Box\right)$ and $\C^\Box=\left({}^\Box\left(\C^\Box\right)\right)^\Box$.
\end{enumerate}
\end{lemma}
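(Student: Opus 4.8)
The plan is to prove this by pure abstract nonsense, using only the definitions of the two operators and the fundamental order-reversing property. The key structural facts are: both operators $(-)^\Box$ and ${}^\Box(-)$ are inclusion-reversing (this is essentially part (2), which I would establish first), and the composite operators are inflationary (this is part (1)). From these two properties alone the idempotency-type statement in part (3) follows by a standard Galois-connection argument, so the bulk of the work is in parts (1) and (2), which are direct unwindings of the lifting-property definition.

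First I would prove part (2). To show $\D^\Box\subseteq\C^\Box$, take any $g\in\D^\Box$; by definition $g$ has the right lifting property with respect to every $f\in\D$, and since $\C\subseteq\D$, in particular $g$ has the right lifting property with respect to every $f\in\C$, so $g\in\C^\Box$. The inclusion ${}^\Box\D\subseteq{}^\Box\C$ is entirely symmetric, reversing the roles of left and right lifting. Next I would prove part (1). To show $\C\subseteq\left({}^\Box\C\right)^\Box$, take $g\in\C$; I must check $g$ lifts on the right against every $f\in{}^\Box\C$. But $f\in{}^\Box\C$ means precisely that $f$ has the left lifting property with respect to every member of $\C$, and since $g\in\C$, this says exactly that $f$ lifts on the left against $g$, equivalently $g$ lifts on the right against $f$; as $f$ was arbitrary in ${}^\Box\C$, we get $g\in\left({}^\Box\C\right)^\Box$. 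The companion inclusion $\C\subseteq{}^\Box\left(\C^\Box\right)$ follows by the mirror-image argument.

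For part (3) I would argue formally from (1) and (2). Applying the first inclusion of (1) to the class ${}^\Box\C$ in place of $\C$ gives ${}^\Box\C\subseteq\left({}^\Box\left({}^\Box\C\right)\right)^\Box$; wait---more directly, the second inclusion of (1) applied with $\C$ replaced by ${}^\Box\C$ yields ${}^\Box\C\subseteq{}^\Box\left(\left({}^\Box\C\right)^\Box\right)$, giving one containment. For the reverse, start from $\C\subseteq\left({}^\Box\C\right)^\Box$ (part (1)) and apply the order-reversing operator ${}^\Box(-)$ from part (2): this flips the inclusion to ${}^\Box\left(\left({}^\Box\C\right)^\Box\right)\subseteq{}^\Box\C$. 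Combining the two inclusions gives the equality ${}^\Box\C={}^\Box\left(\left({}^\Box\C\right)^\Box\right)$. The second equality of (3), namely $\C^\Box=\left({}^\Box\left(\C^\Box\right)\right)^\Box$, is proved by the dual argument, using the other inclusion from (1) and applying $(-)^\Box$ to reverse the inclusion.

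There is no serious obstacle here: every step is a one-line unwinding of the lifting-property definition or a formal consequence of monotonicity, and the whole lemma is the standard fact that $\left({}^\Box(-),\,(-)^\Box\right)$ forms a Galois connection between classes of $S$-maps ordered by inclusion. The only point requiring a little care is bookkeeping---keeping the left/right lifting directions straight and applying the correct one of the two operators when reversing an inclusion in part (3)---but this is routine rather than genuinely difficult.
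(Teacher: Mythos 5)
Your proof is correct and follows essentially the same route as the paper: parts (1) and (2) by direct unwinding of the lifting-property definitions, and part (3) by combining the inflationary inclusion from (1) with the order-reversal from (2). The Galois-connection framing is just a name for what the paper's proof already does implicitly.
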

\begin{proof}
Let $\C\subseteq\D$ be classes of $S-$maps.
\begin{enumerate}
\item Let $g\in\C$ and $f\in{}^\Box\C$. Then $f$ has the left lifting property with respect to $g$ and so $g$ has the right lifting property with respect to $f$. Since this holds for all $f\in{}^\Box\C$ then $g\in\left({}^\Box\C\right)^\Box$.

From the definition of $\C^\Box$, $g$ has the left lifting property with respect to every element of $\C^\Box$ and so $g\in{}^\Box\left(\C^\Box\right)$.

\item Let $f\in{}^\Box\D$. Then $f$ has the left lifting property with respect to all maps in $\D$ and so in particular with respect to all maps in $\C$. Hence $f\in{}^\Box\C$.

Let $g\in\D^\Box$. Then $g$ has the right lifting property with respect to all maps in $\D$ and so in particular with respect to all maps in $\C$. Hence $g\in\C^\Box$.

\item ${}^\Box\C\subseteq{}^\Box\left(\left({}^\Box\C\right)^\Box\right)$ follows from part (1) while the reverse inclusion follows from parts (1) and (2). Similarly for the other equality.
\end{enumerate}
\end{proof}

Notice that if $f$ is any $S-$isomorphism then $f\in{}^\Box\C\cap\C^\Box$ for every class $\C$ of $S-$maps.

\medskip

Let $\C$ be a class of $S-$maps. The maps in $\text{\rm fib}(\C) = \left({}^\Box\C\right)^\Box$ are called the {\em fibrations} of $\C$ whilst the maps in $\cof(\C) ={}^\Box\left(\C^\Box\right)$ are called the {\em cofibrations} of $\C$ (see~\cite{hovey-88}). From Lemma~\ref{box-algebra-lemma} we see that $\C^\Box = \cof(\C)^\Box$ and ${}^\Box\C={}^\Box\text{\rm fib}(\C)$ and so $\cof(\cof(\C))=\cof(\C), \fib(\fib(\C))=\fib(\C)$. In addition, if $\C\subseteq\D$ then $\cof(\C)\subseteq\cof(\D)$.

If $f:A\to B, g:A\to C$ are $S-$maps such that there exist maps $\alpha:C\to B$ and $\beta:B\to C$ with $\beta\alpha=1_C, \alpha g=f$ and $\beta f=g$ then we say that {\em $g$ is a retract of $f$}. In categorical terms, $g$ is a retract of $f$ in the coslice-category $(A\downarrow\C)$.

\begin{lemma}\label{cof-lemma}
Let $S$ be a monoid and let $\C$ be any class of morphisms. Then 
\begin{enumerate}
\item pushouts of maps in $\C$ are in $\cof(\C)$,
\item $\cof(\C)$ is closed under transfinite compositions,
\item $\cof(\C)$ is closed under retracts.
\end{enumerate}
\end{lemma}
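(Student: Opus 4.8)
The three parts share a common engine: by Lemma~\ref{box-algebra-lemma} we have $\C^\Box=\cof(\C)^\Box$, and hence $\cof(\C)={}^\Box(\C^\Box)$. So to place a map in $\cof(\C)$ it suffices to show it has the left lifting property against an arbitrary $p\in\C^\Box$. In each part I would therefore start from a commutative square whose right-hand edge is such a $p$ and manufacture a diagonal filler out of a lift that already exists because the relevant building-block map lies in $\C$ (part 1) or in $\cof(\C)$ (parts 2 and 3) and so lifts against $p$.

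For part (1), consider the pushout of $f\in\C$ along some $u\colon A\to C$, giving $g\colon C\to P$ and $v\colon B\to P$ with $gu=vf$. Given a square from $g$ to $p$ with edges $s\colon C\to X$ and $t\colon P\to Y$, I would paste it onto the pushout square to obtain a square from $f$ to $p$ with edges $su$ and $tv$; since $f\in\C$ and $p\in\C^\Box$ this has a filler $k\colon B\to X$ satisfying $kf=su$ and $pk=tv$. Because $kf=su$, the pair $(s,k)$ is compatible over $A$, and the universal property of the pushout yields a unique $h\colon P\to X$ with $hg=s$ and $hv=k$; a short uniqueness argument then gives $ph=t$, so $h$ is the required filler. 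Since $\C\subseteq\cof(\C)$ by Lemma~\ref{box-algebra-lemma}(1), this in fact shows that $\cof(\C)$ is closed under pushouts.

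Part (3) is a similar, purely diagrammatic chase. With $g$ a retract of $f\in\cof(\C)$ via $\alpha\colon C\to B$ and $\beta\colon B\to C$ satisfying $\beta\alpha=1_C$, $\alpha g=f$ and $\beta f=g$, I would take a square from $g$ to $p$ with edges $s,t$ and re-route it through $f$: the relations $\beta f=g$ and $\alpha g=f$ turn it into a square from $f$ to $p$, which has a filler $k$ since $f\in\cof(\C)$. Setting $h=k\alpha$ and using $\beta\alpha=1_C$ then verifies both lifting equations $hg=s$ and $ph=t$.

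The real work is in part (2). Given a $\lambda$-sequence with each $\phi_{\beta,\beta+1}\in\cof(\C)$ and a square from the transfinite composite $\phi_{0,\lambda}$ to $p\in\C^\Box$ with edges $u\colon A_0\to X$ and $v\colon A_\lambda\to Y$, I would build a coherent family of partial lifts $h_\alpha\colon A_\alpha\to X$ by transfinite induction: put $h_0=u$; at a successor stage use that $\phi_{\alpha,\alpha+1}\in\cof(\C)$ lifts against $p$ to extend $h_\alpha$ to $h_{\alpha+1}$; and at a limit ordinal $\gamma$ amalgamate the compatible family $(h_\alpha)_{\alpha<\gamma}$ into $h_\gamma$ using the fact, built into the definition of a $\lambda$-sequence, that $A_\gamma$ is the colimit of the earlier terms. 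The filler is then $h=h_\lambda$. The main obstacle, and the only genuinely delicate point, is the bookkeeping required to preserve through all three kinds of stage the two coherence conditions $h_\beta\phi_{\alpha,\beta}=h_\alpha$ and $ph_\alpha=v\phi_\alpha$ that drive the induction; the limit step in particular is exactly where the colimit universal property is needed, both to produce $h_\gamma$ and to check $ph_\gamma=v\phi_\gamma$ by uniqueness.
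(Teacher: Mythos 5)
Your proposal is correct and follows essentially the same route as the paper: reduce each part to producing a diagonal filler against an arbitrary $p\in\C^\Box$, using the pushout universal property (with a uniqueness check for the second lifting equation) in part (1), the retraction identities in part (3), and a transfinite recursion of compatible partial lifts amalgamated at limit stages via the colimit property in part (2). If anything, your explicit bookkeeping of the coherence conditions $h_\beta\phi_{\alpha,\beta}=h_\alpha$ and $ph_\alpha=v\phi_\alpha$ at every stage is slightly more careful than the paper's presentation of the limit step.
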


\begin{proof}\ 

\begin{enumerate}
\item Consider the following pushout diagram where $f\in\C$.
$$
\begin{tikzpicture}[description/.style={fill=white,inner sep=2pt}]
\matrix (m) [matrix of math nodes, row sep=3em,
column sep=2.5em, text height=1.5ex, text depth=0.25ex]
{A&X\\
 B& Y\\ };
\path[->,font=\scriptsize]
(m-1-1) edge node[auto,left] {$f$} (m-2-1)
(m-1-2) edge node[auto,right] {$\bar f$} (m-2-2)
(m-2-1) edge node[auto,below] {$\beta$} (m-2-2)
(m-1-1) edge node[auto,above] {$\alpha$} (m-1-2);
\end{tikzpicture}
$$
We want to show that $\bar f\in\cof(\C)$. Let $g:C\to D$ be in $\C^\Box$ and consider the commutative diagram
$$
\begin{tikzpicture}[description/.style={fill=white,inner sep=2pt}]
\matrix (m) [matrix of math nodes, row sep=3em,
column sep=2.5em, text height=1.5ex, text depth=0.25ex]
{A&X&C\\
 B& Y&D\\ };
\path[->,font=\scriptsize]
(m-1-2) edge node[auto,above] {$u$} (m-1-3)
(m-2-2) edge node[auto,below] {$v$} (m-2-3)
(m-1-3) edge node[auto,right] {$g$} (m-2-3)
(m-1-1) edge node[auto,left] {$f$} (m-2-1)
(m-1-2) edge node[auto,right] {$\bar f$} (m-2-2)
(m-2-1) edge node[auto,below] {$\beta$} (m-2-2)
(m-1-1) edge node[auto,above] {$\alpha$} (m-1-2);
\end{tikzpicture}
$$
Since $f\in\C$ then there exists $h:B\to C$ such that $hf=u\alpha$ and $gh=v\beta$. Since $Y$ is a pushout then there exists a unique $\bar h:Y\to C$ with $\bar h\bar f=u$ and $\bar h\beta=h$. So $g\bar h\beta=gh=v\beta$. In a similar way, there exists a unique map $\gamma:Y\to D$ such that $\gamma\bar f=gu$ and $\gamma\beta=v\beta$. However it is then clear that both $v$ and $g\bar h$ satisfy these equations and so the result follows.
\item Suppose that $f_1:A\to B$ and $f_2:B\to C$ are such that $f_1,f_2\in \cof(\C)$ and consider the commutative diagram
$$
\begin{tikzpicture}[description/.style={fill=white,inner sep=2pt}]
\matrix (m) [matrix of math nodes, row sep=3em,
column sep=2.5em, text height=1.5ex, text depth=0.25ex]
{A&X\\
 C& Y\\ };
\path[->,font=\scriptsize]
(m-1-1) edge node[auto,left] {$f_2f_1$} (m-2-1)
(m-1-2) edge node[auto,right] {$g$} (m-2-2)
(m-2-1) edge node[auto,below] {$v$} (m-2-2)
(m-1-1) edge node[auto,above] {$u$} (m-1-2);
\end{tikzpicture}
$$
where $g\in\C^\Box$. Then there exists $h_1 : B\to X$ and $h_2:C\to X$ such that
$$
\begin{tikzpicture}[description/.style={fill=white,inner sep=2pt}]
\matrix (m) [matrix of math nodes, row sep=3em,
column sep=2.5em, text height=1.5ex, text depth=0.25ex]
{A&X\\
B&\\
 C& Y\\ };
\path[->,font=\scriptsize]
(m-2-1) edge node[auto,above] {$h_1$} (m-1-2)
(m-3-1) edge node[auto,right] {$h_2$} (m-1-2)
(m-1-1) edge node[auto,left] {$f_1$} (m-2-1)
(m-2-1) edge node[auto,left] {$f_2$} (m-3-1)
(m-1-2) edge node[auto,right] {$g$} (m-3-2)
(m-3-1) edge node[auto,below] {$v$} (m-3-2)
(m-1-1) edge node[auto,above] {$u$} (m-1-2);
\end{tikzpicture}
$$
commutes. Hence $f_2f_1\in\cof(\C)$.

Now suppose, by way of transfinite induction, that $\lambda$ is a limit ordinal and that for all $i\le j<\lambda$, $f_{i,j}:A_i\to A_j$ are such that $f_{i,j}\in\cof(\C)$ and that if $j$ is a limit ordinal then $(A_j,f_{i,j})$ is a colimit of $(A_i,f_{i,k})_{i\le k<j}$. Let $(A_\lambda,f_{i,\lambda})$ be the directed colimit and consider the following commutative diagram
$$
\begin{tikzpicture}[description/.style={fill=white,inner sep=2pt}]
\matrix (m) [matrix of math nodes, row sep=3em,
column sep=2.5em, text height=1.5ex, text depth=0.25ex]
{A_0&X\\
 A_\lambda& Y\\ };
\path[->,font=\scriptsize]
(m-1-1) edge node[auto,left] {$f_{0,\lambda}$} (m-2-1)
(m-1-2) edge node[auto,right] {$g$} (m-2-2)
(m-2-1) edge node[auto,below] {$v$} (m-2-2)
(m-1-1) edge node[auto,above] {$u$} (m-1-2);
\end{tikzpicture}
$$
with $g\in\C^\Box$. For any $j\ge i\ge 0$ there exists $h_{0,i}:A_i\to X, h_{0,j}:A_j\to X$ such that
$$
\begin{tikzpicture}[description/.style={fill=white,inner sep=2pt}]
\matrix (m) [matrix of math nodes, row sep=3em,
column sep=2.5em, text height=1.5ex, text depth=0.25ex]
{A_0&X\\
A_i&\\
A_j&\\
A_\lambda&Y\\ };
\path[->,font=\scriptsize]
(m-2-1) edge node[auto,above] {$h_i$} (m-1-2)
(m-3-1) edge node[auto,right] {$h_j$} (m-1-2)
(m-1-1) edge node[auto,left] {$f_{0,i}$} (m-2-1)
(m-2-1) edge node[auto,left] {$f_{i,j}$} (m-3-1)
(m-3-1) edge node[auto,left] {$f_{j,\lambda}$} (m-4-1)
(m-1-2) edge node[auto,right] {$g$} (m-4-2)
(m-4-1) edge node[auto,below] {$v$} (m-4-2)
(m-1-1) edge node[auto,above] {$u$} (m-1-2);
\end{tikzpicture}
$$
commutes and so the colimit property means there exists a unique $h_\lambda:A_\lambda\to X$ such that $h_\lambda f_{j,\lambda} = h_j$ for all $j\le\lambda$. So we deduce that $h_\lambda f_{0,\lambda} = h_1f_{0,1}=u$ and the colimit property applied to $Y$ allows us to deduce that $gh_\lambda = v$.
\item Suppose that $f:A\to B\in\cof(\C)$ and $g:A\to C$ are such that there exist maps $\alpha:C\to B$ and $\beta:B\to C$ with $\beta\alpha=1_C, \alpha g=f$ and $\beta f=g$. Consider the following commutative diagram
$$
\begin{tikzpicture}[description/.style={fill=white,inner sep=2pt}]
\matrix (m) [matrix of math nodes, row sep=3em,
column sep=2.5em, text height=1.5ex, text depth=0.25ex]
{&A&X\\
 B& C&Y.\\ };
\path[->,font=\scriptsize]
(m-1-2) edge node[auto,above] {$u$} (m-1-3)
(m-2-2) edge node[auto,below] {$v$} (m-2-3)
(m-1-3) edge node[auto,right] {$h$} (m-2-3)
(m-1-2) edge node[auto,left] {$f$} (m-2-1)
(m-1-2) edge node[auto,right] {$g$} (m-2-2)
(m-2-1.340) edge[<-] node[auto,below] {$\alpha$} (m-2-2.200)
(m-2-1.20) edge node[auto,above] {$\beta$} (m-2-2.160);
\end{tikzpicture}
$$
Then there exists $\gamma:B\to X$ such that $\gamma f=u$ and $h\gamma=v\beta$. It is then easy to check that $\psi:C\to X$ given by $\psi = \gamma\alpha$ is such that $\psi g=u$ and $h\psi=v$ as required.
\end{enumerate}
\end{proof}

\medskip

Although pullbacks of $S-$acts do not always exists we have a partial dual of the previous result, the proof of which is omitted.

\begin{lemma}
Let $S$ be a monoid and let $\C$ be a class of $S-$maps. Then
\begin{enumerate}
\item pullbacks of epimorphisms in $\C$ are in $\fib(\C)$,
\item $\fib(\C)$ is closed under composites,
\item $\fib(\C)$ is closed under retracts.
\end{enumerate}
\end{lemma}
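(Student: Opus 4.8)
The plan is to dualize the proof of Lemma~\ref{cof-lemma} throughout, exploiting the fact that, by definition, $\fib(\C)=\left({}^\Box\C\right)^\Box$, so that a map lies in $\fib(\C)$ precisely when it has the right lifting property with respect to every $f\in{}^\Box\C$. Thus in each part I would fix an arbitrary $f:A\to C$ in ${}^\Box\C$ together with a commutative square posing the required lifting problem, and construct a diagonal filler by running the corresponding argument of Lemma~\ref{cof-lemma} with all arrows reversed.

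For part (1), let $g:X\to Y$ be an epimorphism in $\C$, let $\beta:B\to Y$ be arbitrary, and form the pullback $P=X\times_Y B$ with projections $p_1:P\to X$ and $p_2:P\to B$ satisfying $gp_1=\beta p_2$; the claim is that $p_2$ lies in $\fib(\C)$. Given $f\in{}^\Box\C$ and a square with top edge $u:A\to P$, bottom edge $v:C\to B$, right edge $p_2$ and $p_2u=vf$, I would first paste it onto the pullback square to obtain the outer square with top edge $p_1u$, bottom edge $\beta v$ and right edge $g$; commutativity is immediate from $gp_1u=\beta p_2u=\beta vf$. Since $f\in{}^\Box\C$ and $g\in\C$, the left lifting property produces $k:C\to X$ with $kf=p_1u$ and $gk=\beta v$. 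The universal property of the pullback then yields a unique $h:C\to P$ with $p_1h=k$ and $p_2h=v$, and comparing composites $p_1(hf)=kf=p_1u$ and $p_2(hf)=vf=p_2u$ forces $hf=u$ by the uniqueness clause of the pullback. Thus $h$ solves the lifting problem. The role of the epimorphism hypothesis is to guarantee that the pullback actually exists in the category of $S$-acts: surjectivity of $g$ makes $p_2$ surjective, so $P$ is non-empty whenever $B$ is, side-stepping the fact noted above that pullbacks of $S$-acts need not exist.

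Parts (2) and (3) are routine diagram chases. For composites, given $g_1:X\to Y$ and $g_2:Y\to Z$ in $\fib(\C)$ and a square testing $f\in{}^\Box\C$ against $g_2g_1$, I would first lift through $g_2$ (applied to the square with top edge $g_1u$ and bottom edge $v$) to obtain $h_2:B\to Y$, then lift through $g_1$ against the square with bottom edge $h_2$ to obtain the required filler $h$ satisfying $hf=u$ and $g_2g_1h=g_2h_2=v$. For retracts one uses the dual, slice-category notion: $g:X\to A$ is a retract of $f:Y\to A$ via $\alpha:X\to Y$ and $\beta:Y\to X$ with $\beta\alpha=1_X$, $f\alpha=g$ and $g\beta=f$. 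Given a lifting problem for $g$ against $f'\in{}^\Box\C$ with edges $u,v$, I would precompose the top edge with $\alpha$ to obtain a lifting problem for $f$ (commutativity from $f\alpha u=gu=vf'$), solve it using $f\in\fib(\C)$ to get $\gamma$ with $\gamma f'=\alpha u$ and $f\gamma=v$, and then set $\psi=\beta\gamma$; the relations $\psi f'=\beta\alpha u=u$ and $g\psi=g\beta\gamma=f\gamma=v$ exhibit $\psi$ as the desired filler.

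The one genuinely delicate point is part (1): everything else is a mechanical reversal of arrows, but here I must take care both about the existence of the pullback (hence the epimorphism restriction, and the omission of a transfinite analogue, since inverse limits behave less well than the directed colimits used for cofibrations) and about invoking the \emph{uniqueness} half of the pullback's universal property to conclude $hf=u$ rather than merely $p_2hf=p_2u$. I expect this verification, rather than the construction of the filler itself, to be where the care is needed.
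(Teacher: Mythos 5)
Your proof is correct and is precisely the arrow-reversed version of Lemma~\ref{cof-lemma} that the paper intends (its own proof is omitted as a ``partial dual''), including the right observation that the epimorphism hypothesis in part (1) serves only to guarantee that the pullback exists as a non-empty $S$-act, and the correct use of the uniqueness clause of the pullback to get $hf=u$. The only cosmetic caveat is that in part (3) you use the slice-category (common-codomain) notion of retract, whereas the paper only defines the coslice version; both versions hold by essentially the same short argument, so nothing is lost.
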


\bigskip

The following would appear to be well-known (see for example~\cite[Remark 2.4]{rosicky-02}) and the proof is almost identical to that for Lemma~\ref{cof-lemma}.

\begin{lemma}\label{box-c-lemma}
Let $S$ be a monoid and let $\C$ be any class of $S-$morphisms. Then ${}^\Box\C$ and $\C^\Box$ contain all isomorphisms and
\begin{enumerate}
\item pushouts of maps in ${}^\Box\C$ are in ${}^\Box\C$,
\item ${}^\Box\C$ is closed under transfinite compositions,
\item $^\Box\C$ is closed under retracts.
\end{enumerate}
\end{lemma}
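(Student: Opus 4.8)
The plan is to work directly from the definition: a map lies in ${}^\Box\C$ exactly when it has the left lifting property against every $g\in\C$. So for each of the three closure properties I would fix an arbitrary $g\in\C$ and an arbitrary commutative square having the map in question on the left and $g$ on the right, and then produce a diagonal filler. This is the same bookkeeping as in Lemma~\ref{cof-lemma}, with two cosmetic changes: there one tests against maps $g\in\C^\Box$ and builds fillers from the lifting property of maps in $\C$, whereas here one tests against $g\in\C$ and builds fillers from the lifting property of the maps of ${}^\Box\C$ themselves. That ${}^\Box\C$ and $\C^\Box$ contain all isomorphisms is already recorded in the remark immediately following Lemma~\ref{box-algebra-lemma}, so no argument is needed for that clause.

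For part (1), let $\bar f:X\to Y$ be the pushout of $f:A\to B\in{}^\Box\C$ along $\alpha:A\to X$, with $\beta:B\to Y$ the other leg, and suppose given a commutative square with $\bar f$ on the left, $g\in\C$ on the right, top edge $u:X\to C$ and bottom edge $v:Y\to D$. I would paste this onto the pushout square to obtain a commutative square with $f$ on the left and $g$ on the right, top edge $u\alpha$ and bottom edge $v\beta$; since $f\in{}^\Box\C$ this produces $h:B\to C$ with $hf=u\alpha$ and $gh=v\beta$. The pair $(u,h)$ agrees on $A$, so the universal property of the pushout gives a unique $\bar h:Y\to C$ with $\bar h\bar f=u$ and $\bar h\beta=h$, and this is the filler. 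The only verification is $g\bar h=v$, which holds because $g\bar h$ and $v$ become equal after precomposition with each of $\bar f$ and $\beta$, and these are jointly epic out of the pushout --- the same uniqueness argument that closes Lemma~\ref{cof-lemma}(1).

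Part (2) splits into a binary case and a transfinite case. For $f_2f_1$ with $f_1,f_2\in{}^\Box\C$, a square over $f_2f_1$ against $g\in\C$ is filled by first lifting the square over $f_1$ with bottom edge $vf_2$ to get $h_1$, then lifting the square over $f_2$ with top edge $h_1$ to get $h_2$; the map $h_2$ is the required diagonal. For a genuine transfinite composition along a $\lambda$-sequence $(A_\alpha,f_{\alpha,\beta})$ with each $f_{\beta,\beta+1}\in{}^\Box\C$, given a square over $f_{0,\lambda}$ I would construct a coherent tower of partial fillers $h_i:A_i\to X$ by transfinite induction, maintaining the invariants $h_jf_{i,j}=h_i$ and $gh_i=vf_{i,\lambda}$: set $h_0=u$, extend across successors using the lifting property of $f_{i,i+1}$, and at limit stages glue the earlier $h_i$ using the colimit property of $A_\gamma$. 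The colimit property of $A_\lambda$ then yields a unique $h_\lambda:A_\lambda\to X$ with $h_\lambda f_{i,\lambda}=h_i$ for all $i$, whence $h_\lambda f_{0,\lambda}=u$, and $gh_\lambda=v$ follows since both sides agree after composition with every $f_{i,\lambda}$. I expect this transfinite bookkeeping to be the only non-formal point: the invariants have to be chosen so that the fillers really are compatible with all the transition maps (so that they descend to the colimit) and so that the limit stages are handled by the colimit universal property rather than by any lifting.

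Finally, for part (3), suppose $g:A\to C$ is a retract of $f:A\to B\in{}^\Box\C$ via $\alpha:C\to B$ and $\beta:B\to C$ with $\beta\alpha=1_C$, $\alpha g=f$ and $\beta f=g$. Given a square over $g$ with top edge $u$ and bottom edge $v$ against some $g'\in\C$, I would replace the bottom edge by $v\beta$ to obtain a square over $f$, which commutes because $\beta f=g$; lifting it using $f\in{}^\Box\C$ gives $\gamma:B\to X$ with $\gamma f=u$ and $g'\gamma=v\beta$, and then $\psi=\gamma\alpha:C\to X$ satisfies $\psi g=\gamma f=u$ and $g'\psi=v\beta\alpha=v$, exactly as in Lemma~\ref{cof-lemma}(3). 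Apart from the transfinite step in part (2), every step here is a direct diagram chase.
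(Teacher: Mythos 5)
Your proposal is correct and is exactly the argument the paper intends: the paper gives no separate proof of Lemma~\ref{box-c-lemma}, remarking only that it is ``almost identical to that for Lemma~\ref{cof-lemma},'' and your three diagram chases are precisely that adaptation, testing against $g\in\C$ instead of $g\in\C^\Box$. If anything, your transfinite step in part (2), with the explicit invariants $h_jf_{i,j}=h_i$ and $gh_i=vf_{i,\lambda}$ maintained by induction, is slightly more careful than the corresponding passage in the paper's proof of Lemma~\ref{cof-lemma}(2).
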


Let $\C$ be a class of $S-$morphisms. Denote by $\ret(\C)$ the class of all $S-$morphisms that are retracts of transfinite compositions of pushouts of maps in $\C$. It follows immediately from Lemma~\ref{cof-lemma} that $\C\subseteq\ret(\C)\subseteq\cof(\C)$ and from Lemma~\ref{box-c-lemma} that $\ret({}^\Box\C)={}^\Box\C$. If $\C$ is a class of $S-$maps such that $\C=\ret(\C)$ then we shall say that $\C$ is {\em saturated}.
\bigskip

A {\em weak factorization system} $(\L,\R)$ consists of two classes $\L$ and $\R$ of morphisms satisfying
\begin{enumerate}
\item $\R=\L^\Box$ and $\L={}^\Box\R$,
\item any morphism $h$ has a factorization $h=gf$ with $f\in\L$ and $g\in\R$.
\end{enumerate}

Notice that the `lifting' map involved in property (1) need not be unique. If it is always unique then we refer to the system as a {\em factorization system}. The following result appears to be well-known~\cite[Remark III.4]{adamek-02} and provides a more practical way to check whether the system $(\L,\R)$ forms a weak factorization system.
\begin{proposition}\label{split-factorization-proposition}
Let $S$ be a monoid. Then $(\L,\R)$  is a weak factorization system if and only if
\begin{enumerate}
\item any morphism $h$ has a factorization $h=gf$ with $f\in\L$ and $g\in\R$,
\item for all $f\in\L, g\in\R, f$ has the left lifting property with respect to $g$,
\item if $f:A\to B$ and $f':X\to Y$ is such that there exist maps $\alpha:B\to Y$ and $\beta:A\to X$ then
\begin{enumerate}
\item if $\alpha f\in\L$ and if $\alpha$ is a split monomorphism then $f\in\L$,
\item if $f'\beta\in\R$ and if $\beta$ is a split epimorphism then $f'\in\R$.
\end{enumerate}
\end{enumerate}
\end{proposition}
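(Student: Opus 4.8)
The statement is a biconditional, so the plan is to treat the two directions separately, the forward one being essentially formal and the reverse one carrying the real content via the classical \emph{retract argument}.

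For the direction $\Rightarrow$, I would assume $(\L,\R)$ is a weak factorization system. Then~(1) is exactly the second defining clause, and~(2) is immediate from $\L={}^\Box\R$, since by definition each $f\in\L$ lifts on the left against every member of $\R=\L^\Box$. For~(3a) I would argue that $f$ is a retract of $\alpha f$ in the arrow category: writing $\rho\alpha=1_B$ for the splitting of $\alpha$, the pairs $(1_A,\alpha)$ and $(1_A,\rho)$ are arrow morphisms $f\to\alpha f$ and $\alpha f\to f$ whose composite is the identity on $f$; since $\L={}^\Box\R$ is closed under retracts by Lemma~\ref{box-c-lemma}(3) and $\alpha f\in\L$, we conclude $f\in\L$. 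Condition~(3b) is the exact dual: with $\beta\sigma=1_X$ one exhibits $f'$ as a retract of $f'\beta$, and concludes using that $\R=\L^\Box$ is closed under retracts (the dual of Lemma~\ref{box-c-lemma}(3)).

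For the direction $\Leftarrow$, I would assume (1)--(3) and recover $\L={}^\Box\R$ and $\R=\L^\Box$. Two inclusions come for free from~(2): reading its lifting property one way gives $\L\subseteq{}^\Box\R$, and reading it the other way gives $\R\subseteq\L^\Box$. The substance is the two reverse inclusions. Given $h\in{}^\Box\R$, I would factor $h=gf$ with $f\in\L$ and $g\in\R$ by~(1); the square with sides $f$ (top), $h$ (left), $1_B$ (bottom), $g$ (right) commutes because $gf=h$, so the left lifting property of $h$ against $g$ yields $j$ with $jh=f$ and $gj=1_B$. Thus $j$ is a split monomorphism with $jh=f\in\L$, and~(3a) forces $h\in\L$, giving ${}^\Box\R\subseteq\L$. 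Dually, for $h\in\L^\Box$ I would factor $h=gf$ and lift in the square with sides $1_A$ (top), $f$ (left), $g$ (bottom), $h$ (right) to obtain $k$ with $kf=1_A$ and $hk=g$; now $k$ is a split epimorphism with $hk=g\in\R$, so~(3b) gives $h\in\R$ and hence $\L^\Box\subseteq\R$.

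The only genuinely delicate point is the orientation bookkeeping in the reverse direction: one must track carefully which triangle of each lift produces the splitting and which composite lands in $\L$ (respectively $\R$), so that the split monomorphism $j$ feeds condition~(3a) and the split epimorphism $k$ feeds condition~(3b) in precisely the form in which they are stated. Everything else is formal, once the closure of $\L^\Box$ under retracts is accepted as the dual of Lemma~\ref{box-c-lemma}(3).
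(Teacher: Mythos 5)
Your proposal is correct, and your reverse direction is exactly the paper's retract argument: factor $h$ by (1), lift $h$ against the appropriate half of its own factorization to split that half, and feed the resulting split monomorphism (resp.\ split epimorphism) into (3a) (resp.\ (3b)); your orientation bookkeeping there is right. Where you genuinely differ is in the forward direction for condition (3). The paper proves (3b) by a direct lifting computation: given a square against $f'$ with left leg $g\in\L$, it precomposes the top arrow with the section $\gamma$ of $\beta$, lifts $g$ against $f'\beta\in\R=\L^\Box$, and postcomposes the resulting lift with $\beta$ (and says (3a) is similar). You instead observe that $f$ (resp.\ $f'$) is a retract of $\alpha f$ (resp.\ $f'\beta$) and invoke closure of ${}^\Box\R$ and $\L^\Box$ under retracts. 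This is precisely the reformulation the authors themselves record in the remark immediately following the proposition, so the two arguments are morally identical; yours is shorter but leans on Lemma~\ref{box-c-lemma}(3), which as stated covers only the fixed-domain (coslice) retracts needed for (3a), while (3b) needs the dual fixed-codomain closure of $\C^\Box$ under retracts, a statement the paper makes only implicitly (in the unnumbered lemma on $\fib(\C)$, whose proof is omitted). You flag this dependence explicitly and the dual is routine to verify, so there is no gap --- but for a self-contained write-up you should either prove that dual statement or do what the paper does and write out the short lifting computation directly.
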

\begin{proof}
Suppose that $(\L,\R)$ is a weak factorization system.
Then clearly properties (1) and (2) hold and so let $f,f', \alpha, \beta$ be as in the statement of property (3) and suppose that $\gamma:X\to A$ is the splitting map, i.e. $\gamma\beta=1_X$. Suppose that we have a commutative square
$$
\begin{tikzpicture}[description/.style={fill=white,inner sep=2pt}]
\matrix (m) [matrix of math nodes, row sep=3em,
column sep=2.5em, text height=1.5ex, text depth=0.25ex]
{C&X\\
 D& Y\\ };
\path[->,font=\scriptsize]
(m-1-1) edge node[auto,left] {$g$} (m-2-1)
(m-1-2) edge node[auto,right] {$f'$} (m-2-2)
(m-2-1) edge node[auto,below] {$v$} (m-2-2)
(m-1-1) edge node[auto,above] {$u$} (m-1-2);
\end{tikzpicture}
$$
in which $g\in\L$. Then we have a commutative diagram
$$
\begin{tikzpicture}[description/.style={fill=white,inner sep=2pt}]
\matrix (m) [matrix of math nodes, row sep=3em,
column sep=2.5em, text height=1.5ex, text depth=0.25ex]
{C&A\\
 D& Y\\ };
\path[->,font=\scriptsize]
(m-1-1) edge node[auto,left] {$g$} (m-2-1)
(m-1-2) edge node[auto,right] {$f'\beta$} (m-2-2)
(m-2-1) edge node[auto,below] {$v$} (m-2-2)
(m-1-1) edge node[auto,above] {$\gamma u$} (m-1-2);
\end{tikzpicture}
$$
and so there exists $h:D\to A$ with $hg=\gamma u$ and $f'\beta h=v$. Consequently $\beta h:D\to X$ is such that $(\beta h)g=u$ and $f'(\beta h)=v$ and $f'\in\R$. A similar argument applies to the other case.

Conversely, suppose conditions (1), (2), (3a) and (3b) are satisfied. Then by property (2) we see that $\R\subseteq\L^\Box$ and $\L\subseteq{}^\Box\R$. Suppose that $f:A\to B\in{}^\Box\R$. Notice first that properties (1) and (3) mean that $1_X\in\L\cap\R$ for all $S-$acts $X$. Now by (1) $f=\alpha f'$ with $f'\in\L, \alpha\in\R$. By property (2), $\alpha$ splits with splitting map $\gamma$ such that $f'=\gamma f$ and so by property (3a) $f\in\L$ and $\L={}^\Box\R$. In a similar way, $\R=\L^\Box$.
\end{proof}

Notice that in 3(a) above, $f$ is a retract of $\alpha f$ and that all retracts can be written this way. So this result is simply saying that $\L$ is closed under retracts. Similarly 3(b) is equivalent to $\R$ being closed under retracts.
%Suppose that $(\L,\R)$ is a weak factorization system and suppose that $f:X\to Y, g:X\to Z$ are such that $g$ is a retract of $f$. Then there exists $\alpha:Z\to Y, \beta:Y\to Z$ such that $\beta\alpha=1_Z, \alpha g=f$ and $\beta f=g$. Then $\alpha$ is a split monomorphism and $\alpha g\in\L$ and so by  Lemma~\ref{split-factorization-lemma} $g\in\L$. In a similar way, if $f\in\R$ and $g$ is a retract of $f$ then $g\in\R$.
%In fact
%
%\begin{lemma}
%$(\L,\R)$  is a weak factorization system if and only if
%\begin{enumerate}
%\item any morphism $h$ has a factorization $h=gf$ with $f\in\L$ and $g\in\R$,
%\item for all $f\in\L, g\in\R, f$ has the left lifting property with respect to $g$,
%\item $\L$ and $\R$ are closed under retracts.
%\end{enumerate}
%\end{lemma}
%\begin{proof}
%One way round is obvious. Suppose therefore that we have condition (1)-(3) holding and suppose that $f:A\to B$ is such that there exist a split monomorphism $\alpha:B\to Y$ with $\alpha f\in\L$. Then it is easy to see that $f$ is a retract of $\alpha f$ and so $f\in\L$ by property (3). This, together with its dual and Lemma~\ref{split-factorization-lemma} gives the required result.
%\end{proof}
Notice also that by the remarks after Lemma~\ref{box-c-lemma} it follows that if $(\L,\R)$ is a weak factorization system then $\L$ is saturated.

\section{Weak factorization systems and covers of $S-$acts}
In this section we provide a number of examples of weak factorization systems, some of which are related to the existence of covers of $S-$acts, and refer the reader to \cite{bailey-12} and \cite{renshaw-02} for more details of some of the concepts and results used.

\smallskip

We say that a monomorphism $f:X\to Y$ is {\em unitary} if $y\in\im(f)$ whenever $ys\in\im(f)$ and $s\in S$. Clearly this is equivalent to saying that there exists an $S-$act $Z$ such that $Y\cong X\dot\cup Z$ or in other words, $\im(f)$ is a direct summand of $Y$.

\begin{theorem}
Let $S$ be a monoid and let $\U$ be the class of all {\em unitary $S-$monomorphisms} and $\Sp$ be the class of all split $S-$epimorphisms.
Then $(\U,\Sp)$ is a weak factorization system.
\end{theorem}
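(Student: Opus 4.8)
The plan is to verify the three conditions of Proposition~\ref{split-factorization-proposition} for the pair $(\U,\Sp)$, which spares us checking the lifting axioms directly in both directions. Conditions (1) and (2) are the constructive heart of the argument, while (3a) and (3b) amount to the (already remarked) closure of $\U$ and $\Sp$ under retracts.

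For the factorization condition (1), given an arbitrary $S$-map $h:A\to B$ I would factor it through the coproduct $A\dot\cup B$. Let $f:A\to A\dot\cup B$ be the first coproduct injection; since $\im(f)=A$ is a direct summand of $A\dot\cup B$, $f$ is a unitary monomorphism, so $f\in\U$. Let $g:A\dot\cup B\to B$ be the map acting as $h$ on $A$ and as $1_B$ on $B$. Then $g$ is surjective and the second injection $B\to A\dot\cup B$ is a section of $g$, so $g\in\Sp$, and clearly $gf=h$.

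The key step is the lifting condition (2): given $f:A\to B$ in $\U$, $g:C\to D$ in $\Sp$, and a commutative square with $gu=vf$, I must produce $h:B\to C$ with $hf=u$ and $gh=v$, and here both structures are used at once. Since $f$ is a unitary monomorphism I may write $B=f(A)\dot\cup Z$ for some sub-act $Z$, and since $g$ is a split epimorphism I fix a section $s:D\to C$ with $gs=1_D$. I then define $h$ piecewise: on $f(A)$ set $h(f(a))=u(a)$ (well defined as $f$ is a monomorphism, and an $S$-map since $f(A)$ is a sub-act), and on $Z$ set $h(z)=s\,v(z)$. As $B$ is the disjoint union of the sub-acts $f(A)$ and $Z$, these assemble into an $S$-map $h$. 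One checks $hf=u$ immediately, while $gh=v$ holds on $f(A)$ because $gu=vf$ and on $Z$ because $gs=1_D$.

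For the closure conditions: in (3a), if $\alpha f\in\U$ with $\alpha:B\to Y$ a split monomorphism, say $\pi\alpha=1_B$, then $f$ is a monomorphism because $\alpha f$ is, and whenever $bs\in\im(f)$ we get $\alpha(b)s\in\im(\alpha f)$, so unitarity of $\alpha f$ yields $\alpha(b)\in\im(\alpha f)$, whence applying $\pi$ returns $b\in\im(f)$; thus $f\in\U$. In (3b), if $f'\beta\in\Sp$ with $f'\beta t=1_Y$, then $\beta t$ is a section of $f'$, so $f'\in\Sp$ at once. I expect the main obstacle to be condition (2), since it is the only point requiring both defining properties simultaneously and the piecewise lift must be shown to respect the $S$-action and to be consistent across the decomposition $B=f(A)\dot\cup Z$; the factorization and the two closure conditions are essentially formal.
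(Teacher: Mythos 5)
Your proposal is correct and follows essentially the same route as the paper: both verify the three conditions of Proposition~\ref{split-factorization-proposition}, factor an arbitrary map through the coproduct $A\dot\cup B$, build the lift piecewise using the direct-summand decomposition given by unitarity together with a section of the split epimorphism, and handle the retract-closure conditions by the same direct computations. The only cosmetic difference is that you make explicit a couple of points the paper leaves implicit (that $f$ is a monomorphism in (3a), and the well-definedness of the lift on $f(A)$).
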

\begin{proof}
First if $f:X\to Y$ is an $S-$map then define the split epimorphism $\o f:X\dot\cup Y\to Y$ by $\o f(x)=f(x), \o f(y) = y$ for all $x\in X, y\in Y$ and let $\iota:X\to X\dot\cup Y$ be the inclusion. Then it is clear that $f=\o f\iota$ and that $\iota\in\U, \o f\in\Sp$. Now consider the commutative diagram
$$
\begin{tikzpicture}[description/.style={fill=white,inner sep=2pt}]
\matrix (m) [matrix of math nodes, row sep=3em,
column sep=2.5em, text height=1.5ex, text depth=0.25ex]
{A&C\\ A\dot\cup B& D\\ };

\path[->,font=\scriptsize]
(m-1-1) edge node[auto,left] {$f$} (m-2-1)
(m-1-2) edge node[auto,right] {$g$} (m-2-2)
(m-2-1) edge node[auto,below] {$v$} (m-2-2)
(m-1-1) edge node[auto,above] {$u$} (m-1-2);
\end{tikzpicture}
$$
with $f\in\U, g\in\Sp$. There exists $h:D\to C$ with $gh=1_D$ and so we can define an $S-$map $k:A\dot\cup B\to C$ by $k|_A=u, k|_B=hv|_B$ with the required property.

Finally suppose that $f,f',\alpha,\beta$ are as in Proposition~\ref{split-factorization-proposition}. If $f'\beta$ is an epimorphism then clearly so is $f'$. If $g:Y\to A$ is such that $f'\beta g=1_Y$ then $f'\in\Sp$ with splitting morphism $\beta g$. Suppose then that there exists $b\in B, s\in S$ with $bs=f(a)$. Then $\alpha(b)s=\alpha f(a)$ and so $\alpha(b)\in\im(\alpha f)$ as $\alpha f\in\U$. As $\alpha$ is a monomorphism then $b\in\im(f)$ and $f\in\U$ as required.
\end{proof}

Now suppose that $\L$ is a class of $S-$morphisms that contain the unitary $S-$maps and suppose that $(\L,\R)$ is a weak factorization system. Let $g:C\to D\in\R$ and consider the commutative diagram
$$
\begin{tikzpicture}[description/.style={fill=white,inner sep=2pt}]
\matrix (m) [matrix of math nodes, row sep=3em,
column sep=2.5em, text height=1.5ex, text depth=0.25ex]
{C&C\\ C\dot\cup D& D\\ };

\path[->,font=\scriptsize]
(m-1-1) edge node[auto,left] {$f$} (m-2-1)
(m-1-2) edge node[auto,right] {$g$} (m-2-2)
(m-2-1) edge node[auto,below] {$\o g$} (m-2-2)
(m-1-1) edge node[auto,above] {$1_C$} (m-1-2);
\end{tikzpicture}
$$
with $\o g$ as given above. Then there exists $h:C\dot\cup D\to C$ such that $\o g = gh$ and so $g$ is a split epimorphism and it follows that $\R\subseteq \Sp$.

\smallskip

Let $f:A\to B$ be an $S-$map. We say that an $S-$act $P$ is {\em projective with respect to $f$} if for any $S-$map $g:P\to B$ there exists an $S-$map $h:P\to A$ such that $hf=g$.  Then $P$ is called {\em projective} if it is projective with respect to every $S-$epimorphism. For example, it is well-known that free acts are projective.

\begin{lemma}\label{retract-projective-lemma}
Let $S$ be a monoid and suppose that $P$ is projective with respect to the $S-$map $f:A\to B$ and suppose that $Q$ is a retract of $P$. Then $Q$ is projective with respect to $f$.
\end{lemma}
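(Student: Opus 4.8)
The plan is to run the standard ``lifting properties pass to retracts'' argument, transporting a test map from $Q$ up to $P$, solving the lifting problem there using the projectivity of $P$, and then pulling the solution back to $Q$. Since $Q$ is a retract of $P$, I first fix the witnessing $S$-maps: a section $s\colon Q\to P$ and a retraction $r\colon P\to Q$ with $sr=1_Q$, that is, $Q\xrightarrow{\,s\,}P\xrightarrow{\,r\,}Q$ is the identity on $Q$.

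Now let $g\colon Q\to B$ be an arbitrary $S$-map; by the defining property I must produce an $S$-map $h\colon Q\to A$ with $hf=g$. First I would form the composite $rg\colon P\to B$ (that is, $P\xrightarrow{\,r\,}Q\xrightarrow{\,g\,}B$). Since $P$ is projective with respect to $f$, applying the defining property to $rg$ yields an $S$-map $k\colon P\to A$ with $kf=rg$. I then set $h:=sk\colon Q\to A$ (that is, $Q\xrightarrow{\,s\,}P\xrightarrow{\,k\,}A$), and verify the required identity by the one-line computation $hf=(sk)f=s(kf)=s(rg)=(sr)g=1_Q g=g$, using associativity together with the retraction identity $sr=1_Q$ at the last step.

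I do not expect any genuine obstacle here: the result is the expected fact that projectivity relative to a fixed map is inherited by retracts, and the argument is a routine diagram chase. The only point requiring care is the bookkeeping of composition order and inserting the identity $sr=1_Q$ in exactly the right place; the map $k$ is handed to us directly by the hypothesis that $P$ is projective with respect to $f$. This is entirely analogous in spirit to Lemma~\ref{flat-retract-lemma1} and to the retract-closure clauses of Lemmas~\ref{cof-lemma} and~\ref{box-c-lemma}.
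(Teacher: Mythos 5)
Your proof is correct and is essentially the paper's own argument: both precompose the test map $g$ with the retraction $P\to Q$, use the projectivity of $P$ to lift the resulting map $P\to B$ through $f$, and then precompose that lift with the section $Q\to P$, with the retract identity collapsing everything back to $g$. The only (harmless) difference is that you consistently write composites in diagrammatic left-to-right order, so your $sr=1_Q$, $kf=rg$ and $h=sk$ correspond to the paper's $\beta\alpha=1_Q$, $fh=g\beta$ and $h\alpha$.
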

\begin{proof}
Suppose that $\alpha:Q\to P, \beta : P\to Q$ are such that $\beta\alpha=1_Q$ and suppose that $g:Q\to B$ is an $S-$map. Then $g\beta:P\to B$ is an $S-$map and so there exists $h:P\to A$ such that $fh=g\beta$. Then $h\alpha:Q\to A$ is such that $fh\alpha=g\beta\alpha=g$ as required.
\end{proof}

\begin{lemma}\label{retract-lemma}
Let $S$ be a monoid and let $A$ be a retract of the $S-$act $X$.
\begin{enumerate}
\item If $B$ is a retract of $A$ then $B$ is a retract of $X$.
\item If $B$ is a retract of an $S-$act $Y$ then $A\dot\cup B$ is a retract of the $S-$act $X\dot\cup Y$.
\item If $B$ is a direct summand of $A$ then $B$ is a retract of a direct summand of $X$.
\end{enumerate}
\end{lemma}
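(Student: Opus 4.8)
The plan is to unwind the definition of retract consistently: to say $A$ is a retract of $X$ is to say we are given $S$-maps $\alpha:A\to X$ and $\beta:X\to A$ with $\beta\alpha=1_A$, exactly as in the retracts appearing earlier (e.g.\ in Lemma~\ref{retract-projective-lemma}). Each part then reduces to composing or combining such sections and retractions. For part (1) I would simply compose. If $B$ is a retract of $A$ via $\alpha':B\to A$ and $\beta':A\to B$ with $\beta'\alpha'=1_B$, then $\alpha\alpha':B\to X$ and $\beta'\beta:X\to B$ satisfy $(\beta'\beta)(\alpha\alpha')=\beta'(\beta\alpha)\alpha'=\beta'\alpha'=1_B$, so $B$ is a retract of $X$; this is nothing more than transitivity of ``is a retract of''.

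For part (2) I would invoke the functoriality of the coproduct $\dot\cup$. Given in addition $\alpha':B\to Y$ and $\beta':Y\to B$ with $\beta'\alpha'=1_B$, the universal property of the disjoint union yields an induced map $A\dot\cup B\to X\dot\cup Y$ assembled from $\alpha$ and $\alpha'$, and an induced map $X\dot\cup Y\to A\dot\cup B$ assembled from $\beta$ and $\beta'$. Restricting their composite to each summand gives $\beta\alpha=1_A$ on $A$ and $\beta'\alpha'=1_B$ on $B$, so the composite is $1_{A\dot\cup B}$ and $A\dot\cup B$ is a retract of $X\dot\cup Y$.

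For part (3), the one genuinely substantive step, I would pull the summand decomposition of $A$ back along the retraction $\beta$. Writing $A=B\dot\cup C$ (possible since $B$ is a direct summand of $A$), set $X_B=\beta^{-1}(B)$ and $X_C=\beta^{-1}(C)$. The decisive checks are that each of these preimages is an $S$-subact — which holds because $B$ and $C$ are subacts and $\beta$ is an $S$-map, so $x\in\beta^{-1}(B)$ gives $\beta(xs)=\beta(x)s\in B$ — and that $X_B$ and $X_C$ are disjoint and cover $X$, since $B$ and $C$ partition $A$. This yields $X=X_B\dot\cup X_C$, exhibiting $X_B$ as a direct summand of $X$. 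It then remains to show $B$ is a retract of $X_B$: the restriction $\beta|_{X_B}$ lands in $B$ by the definition of $X_B$, while $\beta\alpha=1_A$ forces $\beta(\alpha(b))=b\in B$ for $b\in B$, so $\alpha|_B$ has image inside $X_B$; their composite is $1_B$.

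I expect the only real care to be needed in part (3), specifically in verifying that $\beta^{-1}(B)$ is a genuine subact giving a direct summand of $X$ (rather than a mere subset) and that $\alpha|_B$ really does factor through $X_B$; parts (1) and (2) are purely formal manipulations of sections and retractions.
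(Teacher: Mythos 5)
Your proposal is correct and follows essentially the same route as the paper: parts (1) and (2) by the evident formal manipulations, and part (3) by taking the preimage $\beta^{-1}(B)$ as the direct summand of $X$ (the paper's $Y=\{x\in X\mid\beta(x)\in B\}$) and checking that $\alpha|_B$ lands in it while $\beta$ restricts to the retraction. The extra verifications you flag (that the preimage is a subact and that the two preimages partition $X$) are exactly the details the paper leaves implicit.
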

\begin{proof}
\begin{enumerate}
\item[1 and 2.] These are straightforward.
%\item Easy.
\item[3.] Let $\alpha : A\to X$ and $\beta :X\to A$ be such that $\beta\alpha=1_A$ and let $Y=\{x\in X|\beta(x)\in B\}$. Then $Y$ is a direct summand of $X$ and $\im(\alpha|_B)\subseteq Y$. Hence $\beta|_Y:Y\to B$ and $B$ is a retract of $Y$.
\end{enumerate}
\end{proof}

Now let $\C$ be a class of $S-$maps and define ${}^\triangle\C$ to be the class of $S-$acts which are projective with respect to each map in $\C$. Notice that by Lemma~\ref{retract-projective-lemma}, ${}^\triangle\C$ is closed under retracts.

\begin{proposition}\label{triangle-split-proposition}
Let $S$ be a monoid and let $\C$ be a class of $S-$epimorphisms closed under pullbacks. Then $D\in{}^\triangle\C$ if and only if every $C\to D\in\C$ splits.
\end{proposition}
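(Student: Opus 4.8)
The plan is to prove the two implications separately, with the reverse direction relying on a pullback to convert a lifting problem into a splitting problem.

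For the forward implication, suppose $D\in{}^\triangle\C$ and let $p:C\to D$ be any map in $\C$. Since $D$ is projective with respect to $p$, I would apply the defining property to the identity map $1_D:D\to D$: this produces an $S$-map $h:D\to C$ with $ph=1_D$, so that $p$ splits. This direction uses neither that the maps in $\C$ are epimorphisms nor that $\C$ is closed under pullbacks.

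For the reverse implication, assume that every map in $\C$ with codomain $D$ splits, and let $f:A\to B\in\C$ together with an arbitrary $S$-map $g:D\to B$ be given; I must produce $h:D\to A$ with $fh=g$. First I would form the pullback of $f$ along $g$, realised explicitly as the sub-$S$-act $P=\{(a,d)\in A\times D : f(a)=g(d)\}$ of $A\times D$ with coordinatewise action and projections $\pi_1:P\to D$ and $\pi_2:P\to A$, satisfying $f\pi_2=g\pi_1$. Because $f$ is an epimorphism and $\C$ is closed under pullbacks, the projection $\pi_1$ is again a map in $\C$ with codomain $D$; hence by hypothesis it splits, yielding a section $s:D\to P$ with $\pi_1 s=1_D$. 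I would then set $h=\pi_2 s:D\to A$ and verify $fh=f\pi_2 s=g\pi_1 s=g$, which shows that $D$ is projective with respect to $f$. As $f\in\C$ was arbitrary, this gives $D\in{}^\triangle\C$.

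The main obstacle is the reverse direction, and specifically the existence and correct handling of the pullback. Given the earlier caveat that pullbacks of $S$-acts need not exist in general, I prefer to exhibit the construction concretely as the sub-$S$-act above rather than invoke an abstract pullback, and then appeal to closure under pullbacks only to conclude $\pi_1\in\C$. Some care is also needed to confirm that $\pi_1$ really is an epimorphism, so that it genuinely lies in $\C$; this is precisely where the hypothesis that the maps in $\C$ are epimorphisms is used, since surjectivity of $f$ guarantees that each $d\in D$ has a preimage $(a,d)\in P$ under $\pi_1$.
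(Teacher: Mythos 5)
Your proof is correct and follows essentially the same route as the paper's: the forward direction lifts the identity $1_D$ to obtain a splitting, and the reverse direction pulls the given map back along $g$, splits the resulting member of $\C$ with codomain $D$, and composes the section with the other pullback projection. Your extra care in realising the pullback concretely as a sub-$S$-act and checking that $\pi_1$ is an epimorphism (so that the pullback is nonempty and genuinely lies in $\C$) is a welcome refinement but does not change the argument.
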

\begin{proof}
Suppose that every $C\to D\in\C$ splits and consider the pullback diagram
$$
\begin{tikzpicture}[description/.style={fill=white,inner sep=2pt}]
\matrix (m) [matrix of math nodes, row sep=3em,
column sep=2.5em, text height=1.5ex, text depth=0.25ex]
{P&A\\
 D& B\\};
\path[->,font=\scriptsize]
(m-1-1) edge node[auto,left] {$\o g$} (m-2-1)
(m-1-1) edge node[auto,above] {$u$} (m-1-2)
(m-1-2) edge node[auto,right] {$g$} (m-2-2)
(m-2-1) edge node[auto,below] {$v$} (m-2-2);
\end{tikzpicture}
$$
where $g\in\C$. By assumption, $\o g\in\C$ and so splits with splitting map $f:D\to P$. Define $h:D\to A$ by $h=uf$. Then $gh = guf = v\o gf = v$ and $D\in\C^\triangle$.

Conversely suppose that $D\in\C^\triangle$ and suppose $g:C\to D\in\C$. Then we have a commutative diagram
$$
\begin{tikzpicture}[description/.style={fill=white,inner sep=2pt}]
\matrix (m) [matrix of math nodes, row sep=3em,
column sep=2.5em, text height=1.5ex, text depth=0.25ex]
{&C\\
 D& D\\};
\path[->,font=\scriptsize]
%(m-1-1) edge node[auto,left] {$$} (m-2-1)
%(m-1-1) edge node[auto,above] {$$} (m-1-2)
(m-1-2) edge node[auto,right] {$g$} (m-2-2)
(m-2-1) edge node[auto,below] {$1$} (m-2-2);
\end{tikzpicture}
$$
and so there exists $f:D\to C$ such that $gf=1_D$ and $g$ splits.
\end{proof}

Let $\X$ be a class of $S-$acts closed under coproducts, direct summands and retracts. Let $\RX$ denote the class of maps with respect to which each $S-$act in $\X$ is projective and let $\UX$ denote the class of unitary $S-$monomorphisms $f:X\to Y$ such that $Y\setminus\im(f)\in\X$. Note that $\RX$ contains all $S-$isomorphisms and that $\X\subseteq{}^\triangle\RX$. Consider the commutative diagram
$$
\begin{tikzpicture}[description/.style={fill=white,inner sep=2pt}]
\matrix (m) [matrix of math nodes, row sep=3em,
column sep=2.5em, text height=1.5ex, text depth=0.25ex]
{A&&C\\
 A\dot\cup B&&D\\ };
\path[->,font=\scriptsize]
%(m-2-1) edge node[auto,above] {$h$} (m-1-3)
(m-1-1) edge node[auto,above] {$u$} (m-1-3)
(m-2-1) edge node[auto,below] {$v$} (m-2-3)
(m-1-3) edge node[auto,right] {$g$} (m-2-3)
(m-1-1) edge node[auto,left] {$f$} (m-2-1);
\end{tikzpicture}
$$
where $f\in\UX, g\in\RX$. Let $h:B\to C$ be given by the projective property and let $k:A\dot\cup B\to C$ be given by $k|A=u, k|B=h$. Then $f$ has the left lifting property with respect to $g$. Now let $f,f',\alpha,\beta$ be as in Proposition ~\ref{split-factorization-proposition}. Let $P\in\X$ and $v:P\to Y$ be an $S-$map. Then since $f'\beta\in\RX$ there exists an $S-$map $h:P\to A$ such that $f'\beta h=v$ and so $\beta h:P\to X$ is such that $f'(\beta h)=v$ and $f'\in\RX$. If $\alpha f\in\UX$ then $Y=A\dot\cup P$ for some $P\in\X$ and, as we have seen previously, $f\in\U$. Hence $B=A\dot\cup Q$ for some $S-$act $Q$. Let $P'=\{p\in P|\beta(p)\in Q\}$ and note that $\im(\alpha|_Q)\subseteq P$ and $P'$ is a direct summand of $P$ and so $P'\in\X$. Consequently $\alpha|_Q:Q\to P'$ splits and so $Q$ is a retract of $P'$ and so $Q\in\X$. This means that $f\in\UX$.

\medskip

Suppose now that every $S-$act has an $\X-$precover. Let $f:A\to B$ be an $S-$map and let $X\to B$ be an $\X-$precover for $B$. Then the factorization $A\to A\dot\cup X\to B$ shows that $(\UX,\RX)$ is a weak factorization system.

\medskip

Conversely, suppose that $(\UX,\RX)$ as defined above is a weak factorization system and let $X$ be an $S-$act. Suppose that there exists $A\in\X$ and an $S-$map $h:A\to X$. Then $h$ factorizes as $gf$ where $f:A\to A\dot\cup Y\in\UX$ and $g:A\dot\cup Y\to X\in\RX$. Notice then that $Y\in\X$. Let $B\in\X$ be an $S-$act and suppose that $u:B\to X$ is an $S-$map. Consider the commutative diagram
$$
\begin{tikzpicture}[description/.style={fill=white,inner sep=2pt}]
\matrix (m) [matrix of math nodes, row sep=3em,
column sep=2.5em, text height=1.5ex, text depth=0.25ex]
{A&&A\dot\cup Y\\
 A\dot\cup B&&X\\ };
\path[->,font=\scriptsize]
(m-1-1) edge node[auto,above] {$f$} (m-1-3)
(m-2-1) edge node[auto,below] {$v$} (m-2-3)
(m-1-3) edge node[auto,right] {$g$} (m-2-3)
(m-1-1) edge node[auto,left] {$\iota$} (m-2-1);
\end{tikzpicture}
$$
where $v|_A=h$ and $v|_B=u$. By the lifting property there exists $k:A\dot\cup B\to A\dot\cup Y$ such that $gk=v$. Then $gk|_B=u$ and $g:A\dot\cup Y\to X$ is an $\X-$precover.

Consequently we have shown

\begin{theorem}
Let $S$ be a monoid and let $\X$ be a class of $S-$acts closed under coproducts, direct summands and retracts and define $\UX$ and $\RX$ as above. Then every $S-$act has an $\X-$precover if and only if $(\UX,\RX)$ is a weak factorization system and for all $S-$acts $X$ there exists $Y\in\X$ such that $\hom(Y,X)\ne\emptyset$.
\end{theorem}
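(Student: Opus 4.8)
The plan is to deduce the theorem directly from Proposition~\ref{split-factorization-proposition} together with the lifting and retract computations carried out in the discussion immediately preceding the statement. Recall that there it was already verified that every $f\in\UX$ has the left lifting property with respect to every $g\in\RX$, and that both $\UX$ and $\RX$ are closed under retracts; these are exactly conditions (2) and (3) of Proposition~\ref{split-factorization-proposition}. Hence, in either direction, the only part of the weak factorization system axioms that remains at issue is the existence of factorizations.

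For the forward direction I would assume that every $S$-act has an $\X$-precover. The hom condition is then immediate: a precover of $X$ is by definition a map $g\colon P\to X$ with $P\in\X$, so $\hom(P,X)\neq\emptyset$. For the factorization condition, given any $f\colon A\to B$ I would choose an $\X$-precover $p\colon X\to B$ and factor $f$ as $A\xrightarrow{\iota}A\dot\cup X\xrightarrow{q}B$, where $\iota$ is the inclusion and $q$ restricts to $f$ on $A$ and to $p$ on $X$. The inclusion $\iota$ is a unitary monomorphism whose complement $X$ lies in $\X$, so $\iota\in\UX$; and $q\in\RX$ because, for any $P\in\X$ and any $v\colon P\to B$, the precover property of $p$ supplies a lift $P\to X$, which composed with the inclusion $X\to A\dot\cup X$ gives a map through which $v$ factors via $q$. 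With factorizations in hand, Proposition~\ref{split-factorization-proposition} yields that $(\UX,\RX)$ is a weak factorization system.

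For the converse I would assume $(\UX,\RX)$ is a weak factorization system and that each $X$ admits a map from some object of $\X$. Fixing $X$, I choose $A\in\X$ and $h\colon A\to X$ and factor $h=gf$ with $f\in\UX$ and $g\in\RX$. Since $f\in\UX$ it is, up to isomorphism, the inclusion $A\to A\dot\cup Y$ with $Y\in\X$, so its codomain $A\dot\cup Y$ lies in $\X$ by closure under coproducts. To see that $g\colon A\dot\cup Y\to X$ is an $\X$-precover, take $B\in\X$ and $u\colon B\to X$ and form the square whose left edge is the inclusion $\iota\colon A\to A\dot\cup B$ (which lies in $\UX$ since $B\in\X$), whose top edge is $f$, whose right edge is $g$, and whose bottom edge $v\colon A\dot\cup B\to X$ restricts to $h$ on $A$ and to $u$ on $B$; this commutes because $gf=h=v\iota$. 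As $\UX={}^\Box\RX$, the map $\iota$ has the left lifting property with respect to $g$, giving $k\colon A\dot\cup B\to A\dot\cup Y$ with $gk=v$, and then $k|_B$ witnesses that $u$ factors through $g$.

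I expect the main obstacle to lie less in any single calculation than in correctly matching the data: in the forward direction one must check that the canonical factorization really lands in the precise classes $\UX$ and $\RX$, namely that the right-hand map is projective with respect to exactly the objects of $\X$ (this is where the precover property is used) and that the left-hand inclusion has complement in $\X$. In the converse the role of the hom-nonemptiness hypothesis is essential, since it is what guarantees a starting object $A\in\X$ to feed into the factorization; without it the weak factorization system could carry no information about acts admitting no map from $\X$, and no precover would be produced.
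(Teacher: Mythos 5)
Your proposal is correct and follows essentially the same route as the paper: both directions reduce to Proposition~\ref{split-factorization-proposition} via the lifting and retract verifications carried out just before the theorem, the forward direction uses the factorization $A\to A\dot\cup X\to B$ through a precover $X\to B$, and the converse factors a map $A\to X$ with $A\in\X$ (supplied by the hom-nonemptiness hypothesis) and extracts the precover from the lifting square against $A\to A\dot\cup B$.
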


\medskip

Now let $\Pr$ be the class of projective $S-$acts and let $\E$ denote the class of all $S-$epimorphisms. By definition, every projective $S-$act is projective with respect to every $S-$epimorphism. It is also well known that $\Pr$ is closed with respect to coproducts and direct summands. That retracts of projective $S-$acts are projective follows from Lemma~\ref{retract-projective-lemma}.
%$$
%\U_\Pr=\{f:X\to Y\in\U|Y\setminus\im(f)\in\Pr\}.
%$$
%
%Let $f:A\to B$ be an $S-$map and suppose that $F$ is the free $S-$act over $B$. Consider the factorization $A\to A\dot\cup F\to B$ where $\o f:A\dot\cup F\to B$ is given by $\o f|_A= f,$ and $\o f|_F$ is the canonical epimorphism.
%
From~\cite[Proposition 5.8]{bailey-12} we see that every $S-$act has a $\Pr-$precover and consequently we deduce
\begin{corollary}
$(\UPr,\E)$ is a weak factorization system.
\end{corollary}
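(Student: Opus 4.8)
The plan is to obtain the corollary as a direct instantiation of the preceding Theorem with $\X=\Pr$, the class of projective $S-$acts. The text has already recorded the three closure properties that the Theorem demands of $\X$: $\Pr$ is closed under coproducts, under direct summands, and (via Lemma~\ref{retract-projective-lemma}) under retracts. Thus $\UPr$ and $\R_{\Pr}$ are precisely the classes $\UX$ and $\RX$ of the Theorem in the case $\X=\Pr$, and it only remains to supply the hypothesis that every $S-$act has a $\Pr-$precover.

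Before applying the Theorem, the one genuine identification I would carry out is that $\R_{\Pr}=\E$; this is what converts the abstract right-hand class into the concrete class of epimorphisms named in the statement. The inclusion $\E\subseteq\R_{\Pr}$ is immediate, since by definition a projective act is projective with respect to every epimorphism. For the reverse inclusion I would test membership against the free act on one generator, namely $S_S$, which is projective. Given $f:A\to B\in\R_{\Pr}$ and any $b\in B$, the $S-$map $S\to B$ determined by $1\mapsto b$ must lift through $f$, so that $b\in\im(f)$; as $b$ is arbitrary this forces $f$ to be surjective, hence an epimorphism, giving $\R_{\Pr}\subseteq\E$.

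With this identification in hand, I would invoke \cite[Proposition 5.8]{bailey-12} to assert that every $S-$act has a $\Pr-$precover, and then apply the forward direction of the Theorem: the existence of $\Pr-$precovers forces $(\UPr,\R_{\Pr})$ to be a weak factorization system (the accompanying hom-condition of the Theorem holding automatically, since every $S-$act receives a map from a free, hence projective, act). Substituting $\R_{\Pr}=\E$ then yields that $(\UPr,\E)$ is a weak factorization system. The proof has essentially no obstacle of its own: every structural fact is borrowed from the Theorem and its verified hypotheses, and the sole step requiring a short argument is the equality $\R_{\Pr}=\E$, for which the free act $S_S$ is the natural test object.
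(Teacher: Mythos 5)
Your proposal is correct and follows essentially the same route as the paper: instantiate the preceding theorem with $\X=\Pr$, using the recorded closure of $\Pr$ under coproducts, direct summands and retracts, the existence of $\Pr-$precovers from \cite[Proposition 5.8]{bailey-12}, and the identification $\R_{\Pr}=\E$. You are in fact slightly more explicit than the paper, which states only the inclusion $\E\subseteq\R_{\Pr}$ and leaves the reverse inclusion (your test against the projective act $S$) and the hom-condition implicit.
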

%
%\begin{proof}
%The factorization properties follow from the above remarks.
%Consider the commutative diagram
%$$
%\begin{tikzpicture}[description/.style={fill=white,inner sep=2pt}]
%\matrix (m) [matrix of math nodes, row sep=3em,
%column sep=2.5em, text height=1.5ex, text depth=0.25ex]
%{A&C\\ A\dot\cup P& D\\ };
%
%\path[->,font=\scriptsize]
%(m-1-1) edge node[auto,left] {$f$} (m-2-1)
%(m-1-2) edge node[auto,right] {$g$} (m-2-2)
%(m-2-1) edge node[auto,below] {$v$} (m-2-2)
%(m-1-1) edge node[auto,above] {$u$} (m-1-2);
%\end{tikzpicture}
%$$
%with $f\in\U_\Pr, g\in\E$. Since $P$ is projective there exists $h:P\to C$ such that $gh=v$ and so we can define an $S-$map $k:A\dot\cup P\to C$ by $k|_A=u, k|_P=h$ with the required property.
%
%Finally suppose that $f,f',\alpha,\beta$ are as in Lemma~\ref{split-factorization-lemma}. If $f'\beta$ is an epimorphism then clearly so is $f'$. In a similar manner to the previous proposition it follows that $f\in\U$. Now let $P=Y\setminus\im(\alpha f)$ and $Q=B\setminus\im(f)$. Let $P'=\{p\in P|\beta(p)\in Q\}$ and note that $P'$ is a subact of $P$ such that $\im(\alpha)\subseteq P'$ and $P'\to P$ is unitary. Hence $P'$ is a direct summand of $P$ and so is projective. It follows that $\alpha|_Q:Q\to P'$ splits. It is well known that retracts of projective acts are projective and so $f\in\U_\Pr$ as required.
%\end{proof}

Since retracts of free $S-$acts are not necessarily free then it is clear that we cannot necessarily replace projective by free in the above proposition. Also, if $\Fr$ denotes the class of free $S-$acts then $\Fr\subsetneq {}^\triangle\R_\Fr = \Pr$. It also follows by a similar argument to that outlined previously that if $(\L,\R)$ is a weak factorization system in which $\UPr\subseteq\L$ then $\R\subseteq\E$.

\medskip

%If $\X$ is a class of $S-$acts then let $\R_\X$ denote the class of all retracts of acts in $\X$ and note that $\X\subseteq\R_\X$ and that $\R_{\R_\X}=\R_\X$.

Let $\psi:X\to Y$ be an $S-$epimorphism. We say that $\psi$ is {\em a pure epimorphism} if for every finitely presented $S-$act $M$ and every $S-$map $f:M\to Y$ there exists $g:M\to X$ such that $f=\psi g$. For more details of pure epimorphisms and their connection with covers of acts see~\cite{bailey-12}.
Now Let $\FP$ be the set of finitely presented acts, $\IFP$ the class of acts whose indecomposable components are in $\FP$ (in other words all coproducts of acts in $\FP$) and let $\RIFP$ be the class of all retracts of acts in $\IFP$.

Let $\UFP$  be the class of unitary monomorphisms $f:X \to Y$, where $Y\setminus\im(f)\in\RIFP$, and let $\PE$ be the class of pure epimorphisms. From~\cite{bailey-12} we easily see that acts in $\IFP$ are projective with respect to pure epimorphisms and therefore by Lemma~\ref{retract-projective-lemma} so are acts in $\RIFP$.
%that if $g:C\to D$ is a pure epimorphism and if $f:X\to D$ is an $S-$map with $X\in\IFP$ then there exists $h:X\to C$ with $gh=f$. Let $f\in\U_\FP$ and $g\in\PE$ and suppose we have a commutative diagram.
%$$
%\begin{tikzpicture}[description/.style={fill=white,inner sep=2pt}]
%\matrix (m) [matrix of math nodes, row sep=3em,
%column sep=2.5em, text height=1.5ex, text depth=0.25ex]
%{A&C\\ A\dot\cup X& D\\ };
%
%\path[->,font=\scriptsize]
%(m-1-1) edge node[auto,left] {$f$} (m-2-1)
%(m-1-2) edge node[auto,right] {$g$} (m-2-2)
%(m-2-1) edge node[auto,below] {$v$} (m-2-2)
%(m-1-1) edge node[auto,above] {$u$} (m-1-2);
%\end{tikzpicture}
%$$
%Define $k:A\dot\cup X \to C$ by $k|_A=u, k|_X=h$ so that we have
Therefore $\R_\IFP=\PE$.
% and hence we can deduce the required lifting property. 
From Lemma~\ref{retract-lemma} we can easily deduce that $\RIFP$ is closed under retracts, coproducts and direct summands.

%To show that every morphism factors as a composition of maps from  $\UFP$ and $\PE$
We use \cite[Corollary 4.14 and Proposition 4.3]{bailey-12} to deduce that every act has an epimorphic $\IFP-$precover. Specifically, note that $\IFP$ is closed under coproducts and direct summands, and the generator $S \in \IFP$ and so for every $S-$act $A$, $\hom(S,A) \neq \emptyset$. Finally every indecomposable $\IFP-$act (that is, an $\FP-$act) is bounded in size by $\max\{\aleph_0,|S|\}$. 
Consequently every act has an epimorphic $\IFP-$precover and so an $\RIFP-$precover.
%Thus given any $f:A \to B$, we can define $g:A \to A \dot\cup X$, where $k:X\to B$ is an epimorphic $\IFP-$precover of $B$, and $h:A \dot\cup X\to B$ is defined by $h|_A=f$ and $h|_X=k$. Then $f=hg, g \in \U_\FP$ and $h \in \PE$. We have therefore deduced

\smallskip

We can therefore deduce
\begin{corollary}
$(\UFP,\PE)$ is a weak factorization system.
\end{corollary}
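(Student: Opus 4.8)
The plan is to realise $(\UFP,\PE)$ as an instance of the general construction encapsulated in the theorem characterising $\X$-precovers, taking $\X=\RIFP$. That theorem requires $\X$ to be closed under coproducts, direct summands and retracts, and this has already been recorded for $\RIFP$ (deduced from Lemma~\ref{retract-lemma}). So the work reduces to: (i) identifying the classes $\UX$ and $\RX$ produced by the general construction for $\X=\RIFP$ with $\UFP$ and $\PE$ respectively, and (ii) supplying the input hypothesis that every $S$-act has an $\RIFP$-precover.

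For the identifications, $\UX=\UFP$ holds by definition, since $\UX$ is the class of unitary monomorphisms $f:X\to Y$ with $Y\setminus\im(f)\in\X$ and $\UFP$ is exactly this class for $\X=\RIFP$. The equality $\RX=\PE$, where $\RX=\R_{\RIFP}$ since $\X=\RIFP$, is the one delicate point, and I would establish it by a squeeze. On the one hand, acts in $\RIFP$ are projective with respect to pure epimorphisms (by Lemma~\ref{retract-projective-lemma}, given that acts in $\IFP$ are), so $\PE\subseteq\R_{\RIFP}$. On the other hand, since $\IFP\subseteq\RIFP$, requiring projectivity of the larger class is a stronger condition on a map, whence $\R_{\RIFP}\subseteq\R_{\IFP}$; combined with the already-established equality $\R_{\IFP}=\PE$ this gives $\R_{\RIFP}\subseteq\PE$. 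Hence $\R_{\RIFP}=\PE$.

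Finally I would invoke the precover existence recorded above: every $S$-act has an epimorphic $\IFP$-precover, and since $\RIFP$ is the retract-closure of $\IFP$, such a map is automatically an $\RIFP$-precover (a map $g':P'\to A$ out of a retract $P'$ of some $Q\in\IFP$ is handled by factoring $g'\beta:Q\to A$ through the $\IFP$-precover, where $\beta:Q\to P'$ is the retraction, and then precomposing with the section $P'\to Q$). The theorem's hom condition is witnessed by $S\in\IFP\subseteq\RIFP$, for which $\hom(S,A)\ne\emptyset$ for every $A$. The forward implication of the theorem then delivers that $(\UX,\RX)=(\UFP,\PE)$ is a weak factorization system. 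I expect no serious obstacle: the genuine content sits in the cited precover result and in the general theorem, and the only step needing real care is the monotonicity-plus-squeeze argument for $\RX=\PE$, where one must avoid conflating $\R_{\IFP}$ with $\R_{\RIFP}$ and must remember that enlarging $\X$ shrinks $\RX$.
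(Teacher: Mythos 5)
Your proof is correct and follows essentially the same route as the paper: apply the general theorem on $\X$-precovers with $\X=\RIFP$, using the closure properties of $\RIFP$, the identification $\R_{\RIFP}=\PE$, and the existence of epimorphic $\IFP$-precovers (hence $\RIFP$-precovers). Your additional care in distinguishing $\R_{\IFP}$ from $\R_{\RIFP}$ and in verifying that an $\IFP$-precover is automatically an $\RIFP$-precover merely fills in details the paper leaves implicit.
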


\medskip

We may wish to ask, for which classes of monomorphism $\L$ is it true that $\R\subseteq\E$? We can supply a partial answer in the case of monoids with a left zero by observing the following connection with the concept of injectivity.

Recall that we say that an $S-$act $X$ is {\em injective} if for all $S-$monomorphisms $f:A\to B$ and all maps $g:A\to X$ there exists an $S-$map $h:B\to X$ such that $hf=g$. It is well known (see~\cite{ahsan-08} or~\cite{kilp-00}) that $X$ is injective if and only if every monomorphism with $X$ as domain, splits. Now let $\C$ be a class of maps and define $\C^\triangle$ to be the class of $S-$acts {\em injective to $\C$}. In other words
$$
\C^\triangle = \{C\in \text{Act}-S: C\to1\in\C^\Box\}
$$
where $1$ is the 1-element $S-$act.
The proof of the following result is similar to that of Proposition~\ref{triangle-split-proposition}.
\begin{proposition}
Let $S$ be a monoid and let $\C$ be a class of $S-$monomorphisms closed under pushouts. Then $C\in\C^\triangle$ if and only if every $C\to D\in\C$ splits.
\end{proposition}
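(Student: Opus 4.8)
The plan is to mirror the proof of Proposition~\ref{triangle-split-proposition}, replacing pullbacks of epimorphisms by pushouts of monomorphisms throughout. First I would unwind the definition of $\C^\triangle$: since $1$ is terminal, any commutative square whose right-hand target is $1$ commutes automatically, so the statement $C\to1\in\C^\Box$ says precisely that for every $f:A\to B$ in $\C$ and every $S$-map $g:A\to C$ there exists $h:B\to C$ with $hf=g$. In other words, $C\in\C^\triangle$ exactly when $C$ is injective with respect to every monomorphism in $\C$.

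For the forward implication I would take any $g:C\to D$ in $\C$ and apply injectivity of $C$ to the map $g$ itself together with $1_C:C\to C$; the resulting lift $h:D\to C$ satisfies $hg=1_C$, so $g$ splits. This is the identity-square argument used in the converse half of Proposition~\ref{triangle-split-proposition}, now run with monomorphisms in place of epimorphisms.

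The substantive direction is the reverse one. Assume every $C\to D$ in $\C$ splits, take $f:A\to B$ in $\C$ and an arbitrary $g:A\to C$, and form the pushout of $f$ along $g$, yielding a square with legs $f':C\to P$ and $g':B\to P$ and relation $g'f=f'g$. Pushouts of $S$-acts always exist (concretely $P\cong(B\dot\cup C)/\rho$, as recalled earlier), so there is none of the existence worry that afflicts pullbacks. Since $\C$ is closed under pushouts and $f\in\C$, the pushout leg $f':C\to P$ lies in $\C$; as it has domain $C$, the hypothesis supplies a retraction $r:P\to C$ with $rf'=1_C$. Setting $h=rg':B\to C$ then gives $hf=rg'f=rf'g=g$, so $C$ is injective to $f$. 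As $f$ and $g$ were arbitrary, $C\in\C^\triangle$.

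The only point that requires care—and the analogue of the main step in Proposition~\ref{triangle-split-proposition}—is the identification of $f':C\to P$ as the relevant element of $\C$: one must apply the closure-under-pushouts hypothesis to the leg of the pushout square parallel to $f$, so that the splitting hypothesis becomes available for a map \emph{out of} $C$. Once that is in place the verification $hf=g$ is a one-line diagram chase and the existence of the pushout is free, so I expect no genuine obstacle beyond keeping the variance of the construction straight.
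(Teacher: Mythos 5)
Your proof is correct and follows exactly the route the paper intends: the paper omits the proof, remarking only that it is ``similar to that of Proposition~\ref{triangle-split-proposition}'', and your argument is precisely that dualization --- the identity-square trick for the forward direction and the pushout-leg splitting for the converse, with the correct observation that existence of pushouts (unlike pullbacks) is automatic for $S$-acts.
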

%\begin{proof}
%Suppose that every $C\to D\in\C$ splits and consider the pushout diagram
%$$
%\begin{tikzpicture}[description/.style={fill=white,inner sep=2pt}]
%\matrix (m) [matrix of math nodes, row sep=3em,
%column sep=2.5em, text height=1.5ex, text depth=0.25ex]
%{A&C\\
% B& P\\};
%\path[->,font=\scriptsize]
%(m-1-1) edge node[auto,left] {$f$} (m-2-1)
%(m-1-1) edge node[auto,above] {$u$} (m-1-2)
%(m-1-2) edge node[auto,right] {$\o f$} (m-2-2)
%\end{tikzpicture}
%$$
%where $f\in\C$. By assumption, $\o f\in\C$ and so splits with splitting map $g:P\to C$. Define $h:B\to C$ by $h=gv$. Then $hf = gvf = g\o fu = u$ and $C\in\C^\triangle$.
%
%Conversely suppose $C\in\C^\triangle$ and suppose $f:C\to D\in\C$. Then we have a commutative diagram
%$$
%\begin{tikzpicture}[description/.style={fill=white,inner sep=2pt}]
%\matrix (m) [matrix of math nodes, row sep=3em,
%column sep=2.5em, text height=1.5ex, text depth=0.25ex]
%{C&C\\
% D& 1\\};
%\path[->,font=\scriptsize]
%(m-1-1) edge node[auto,left] {$f$} (m-2-1)
%(m-1-1) edge node[auto,above] {$1$} (m-1-2)
%(m-1-2) edge node[auto,right] {$$} (m-2-2)
%(m-2-1) edge node[auto,below] {$$} (m-2-2);
%\end{tikzpicture}
%$$
%and since $C\to1\in\C^\Box$ then $f$ splits.
%\end{proof}

If $S$ is a monoid with a left zero element $z$, then every $S-$act $A$ contains at least 1 fixed point since for all $a\in A, s\in S,  (az)s=az$. Denote the set of fixed points of an act $A$ by $\Fix(A)$. Given an $S-$map $g:C\to D$ and $d\in \Fix(D)$ let $K_d = \{c\in C|g(c)=d\}$. Then $K_d$ is either empty or an $S-$subact of $C$.

\begin{lemma}\label{r-epi-lemma}
Let $S$ be a monoid with a left zero $z$ and let $\C$ be a class of $S-$morphisms containing the inclusion $\{z\}\to S$.
If $g:C\to D\in\C^\Box$ then $K_d\in\C^\triangle$ for all $d\in \Fix(D)$ with $K_d\ne\emptyset$. Moreover $g$ is an epimorphism if and only if $\Fix(D)\subseteq\im(g)$.
\end{lemma}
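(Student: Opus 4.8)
My plan is to treat the two assertions separately, in each case manufacturing a suitable commutative square and then invoking the hypothesis $g\in\C^\Box$; the fixed-point and left-zero hypotheses are used exactly to guarantee that the maps I write down are genuinely $S$-linear.

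For the first assertion, recall that $K_d\in\C^\triangle$ means precisely that the unique map $K_d\to1$ lies in $\C^\Box$. Since $1$ is terminal, every square over $1$ commutes automatically, so this reduces to showing that for each $f:A\to B$ in $\C$ and each $S$-map $u:A\to K_d$ there is a fill-in $h:B\to K_d$ with $hf=u$. The key observation is that the constant map $v:B\to D$ sending every element to $d$ is a genuine $S$-map, precisely because $d\in\Fix(D)$ forces $v(bs)=d=ds=v(b)s$. Composing $u$ with the inclusion $K_d\hookrightarrow C$ gives a map $u':A\to C$ with $gu'=vf$ (both send everything to $d$), so the square with top $u'$, bottom $v$, left $f$ and right $g$ commutes. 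As $g\in\C^\Box$ and $f\in\C$ there is a diagonal $h':B\to C$ with $h'f=u'$ and $gh'=v$; the latter equation says $g(h'(b))=d$ for all $b\in B$, i.e. $h'$ factors through the subact $K_d$. The resulting corestriction $h:B\to K_d$ satisfies $hf=u$ after cancelling the monic inclusion, which is exactly what is required.

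For the second assertion, the forward direction is immediate: epimorphisms of $S$-acts are surjective, so $\im(g)=D\supseteq\Fix(D)$. For the converse I would exploit the inclusion $\iota:\{z\}\to S$, which lies in $\C$. Given an arbitrary $d\in D$, note first that $dz\in\Fix(D)$ (since $zs=z$), hence $dz\in\im(g)$ by hypothesis; choosing a preimage $c_0$ and replacing it by $c_1=c_0z$ produces a \emph{fixed point} $c_1\in C$ with $g(c_1)=(dz)z=dz$. Then $u:\{z\}\to C$ with $u(z)=c_1$ is an $S$-map (its image is a fixed point), and $v:S\to D$ with $v(s)=ds$ is an $S$-map making the square with left $\iota$ and right $g$ commute, since $gu(z)=dz=v(z)=v\iota(z)$. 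The lift $h:S\to C$ guaranteed by $g\in\C^\Box$ satisfies $gh=v$, and evaluating at the identity $1\in S$ gives $g(h(1))=d$, so $d\in\im(g)$ and $g$ is surjective.

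The only genuinely delicate points, and where I would concentrate the verification, are the following. First, one must check that each constructed map is honestly $S$-linear: this is where $d\in\Fix(D)$ (making the constant map $v$ linear in the first part) and the left-zero hypothesis (making $c_0z$ a fixed point in the second part) do all the work. Second, the crux of the converse is upgrading the membership of the fixed point $dz$ in $\im(g)$ to membership of $d$ itself; this is exactly what the presence of $\iota:\{z\}\to S$ in $\C$ buys us, since evaluating the resulting lift at $1$ recovers a preimage of $d$. Everything else is routine diagram-chasing.
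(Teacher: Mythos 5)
Your proof is correct and follows essentially the same route as the paper's: for the first claim you use the constant map at $d$ to build the commutative square against $g$ and then corestrict the resulting diagonal to $K_d$, and for the second you build the square on $\iota:\{z\}\to S$ with bottom map $s\mapsto ds$ and evaluate the lift at $1$. Your extra care in replacing a preimage $c_0$ of $dz$ by the fixed point $c_0z$, so that the top map $\{z\}\to C$ is genuinely $S$-linear, is a detail the paper's proof passes over silently; otherwise the two arguments are identical.
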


\begin{proof}
Let $g:C\to D\in\C^\Box$ and let $d\in\Fix(D)$ with $\iota:K_d\to C$ the inclusion. Let $1$ be the 1-element $S-$act and consider the commutative diagram
$$
\begin{tikzpicture}[description/.style={fill=white,inner sep=2pt}]
\matrix (m) [matrix of math nodes, row sep=3em,
column sep=2.5em, text height=1.5ex, text depth=0.25ex]
{A&K_d\\
&C\\
 & D\\
B&1\\};
\path[->,font=\scriptsize]
(m-1-1) edge node[auto,left] {$f$} (m-4-1)
(m-1-1) edge node[auto,above] {$u$} (m-1-2)
(m-1-2) edge node[auto,right] {$\iota$} (m-2-2)
(m-2-2) edge node[auto,right] {$g$} (m-3-2)
(m-3-2) edge node[auto,right] {$$} (m-4-2)
(m-4-1) edge node[auto,above] {$0_d$} (m-3-2)
(m-4-1) edge node[auto,below] {$$} (m-4-2);
\end{tikzpicture}
$$
where $f\in\C$ and where $0_d(b)=d$ for all $b\in B$. Since $g\in\C^\Box$ then there exists $h:B\to C$ with $gh=0_d$ and $hf=\iota u$ and so since $gh=0_d$ there exists $h':B\to K_d$ with $\iota h'=h$. Consequently $\iota h' f = hf = \iota u$ and so $h'f=u$ and $K_d\in\C^\triangle$.

\smallskip

Now suppose that $\Fix(D)\subseteq\im(g)$ and let $d\in D$ and define $p:S\to D$ by $p(s)=ds$. Let $c\in C$ be such that $g(c)=dz$ and define $\{z\}\to C$ by $z\mapsto c$. Since $\{z\}\to S\in\C$ then there exists $h:S\to C$ such that $gh=p$. Hence $d=gh(1)$ and so $g$ is an epimorphism.
\end{proof}

\smallskip

If $\X$ is a class of $S-$acts then we shall denote by $\X-mono$ the class of $S-$monomorphisms $f:X\to Y$ such that the Rees quotient $Y/X\in\X$.
We now deduce from Lemma~\ref{r-epi-lemma}

\begin{proposition}
Let $S$ be a monoid with a left zero and suppose that $\X$ is a class of $S-$acts that contain $S$. If $(\L,\R)$ is a weak factorizations system such that $\X-mono\subseteq\L$ then $\R\subseteq\E$ if and only if for every $g:C\to D\in\R, \Fix(D)\subseteq\im(g)$.
\end{proposition}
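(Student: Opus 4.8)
The plan is to reduce the whole statement to Lemma~\ref{r-epi-lemma} by checking that $\L$ contains the one map that lemma requires, namely the inclusion $\{z\}\to S$. Once that membership is established, the claimed equivalence is essentially immediate, since Lemma~\ref{r-epi-lemma} already packages the epimorphism criterion we need in terms of fixed points.

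First I would verify that the inclusion $\iota:\{z\}\to S$ lies in $\X-mono$, and hence in $\L$. Because $z$ is a left zero we have $zs=z$ for all $s\in S$, so $\{z\}$ is an $S$-subact of $S$ and $\iota$ is a monomorphism. Its image $\im\iota=\{z\}$ is a single element, so the associated Rees congruence $\tau=\im\iota\times\im\iota\cup 1_S$ collapses to $1_S$; consequently the Rees quotient $S/\{z\}$ is (isomorphic to) $S$ itself. As $S\in\X$, this places $\iota$ in $\X-mono\subseteq\L$. This identification of $S/\{z\}$ with $S$ is really the only point that carries any content, and it is what lets us invoke Lemma~\ref{r-epi-lemma} with $\C=\L$.

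Next, since $(\L,\R)$ is a weak factorization system we have $\R=\L^\Box$. Taking $\C=\L$ in Lemma~\ref{r-epi-lemma}, whose hypothesis that $\C$ contains $\{z\}\to S$ is now satisfied, yields: for each $g:C\to D\in\R=\L^\Box$, the map $g$ is an epimorphism if and only if $\Fix(D)\subseteq\im(g)$.

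Finally I would quantify this over all of $\R$. The assertion $\R\subseteq\E$ says precisely that every $g\in\R$ is an epimorphism, and by the previous paragraph this holds for a given $g:C\to D$ exactly when $\Fix(D)\subseteq\im(g)$. Hence $\R\subseteq\E$ if and only if $\Fix(D)\subseteq\im(g)$ for every $g:C\to D\in\R$, which is the desired statement. I do not expect any serious obstacle here; the proof is a direct application of Lemma~\ref{r-epi-lemma}, and the only step needing care is confirming that the Rees quotient of $S$ by $\{z\}$ returns $S$, so that $\{z\}\to S$ genuinely belongs to $\L$.
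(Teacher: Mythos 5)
Your proof is correct and follows exactly the route the paper intends: the paper states this proposition as an immediate deduction from Lemma~\ref{r-epi-lemma}, and the one substantive point --- that $S/\{z\}\cong S\in\X$, so the inclusion $\{z\}\to S$ lies in $\X-mono\subseteq\L$ and the lemma applies with $\C=\L$, $\C^\Box=\R$ --- is precisely the point you check.
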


Let $\F$ denote the class of flat $S-$acts. We cannot at this stage determine whether or not there is a class $\R$ such that ($\F-mono,\R)$ is a weak factorization system. However we do have
%Notice then that $0\to X\in\F-$mono if and only if $X$ is flat.

\begin{theorem}
$\F-mono$ is saturated.
%$\F-$mono is closed under retracts of transfinite compositions of pushouts of maps in $\F-$mono.
\end{theorem}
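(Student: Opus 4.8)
The plan is to show that $\F-mono$ is closed under retracts, pushouts and transfinite compositions; since $\C\subseteq\ret(\C)$ always, this immediately gives $\ret(\F-mono)=\F-mono$, i.e.\ that $\F-mono$ is saturated. Throughout, a map lies in $\F-mono$ precisely when it is a monomorphism with flat Rees quotient, so each verification splits into a ``monomorphism'' part (formal) and a ``flat quotient'' part (the substance). I would dispose of retracts first. Suppose $f:A\to B\in\F-mono$ and $g:A\to C$ is a retract of $f$, with $\alpha:C\to B$, $\beta:B\to C$ satisfying $\beta\alpha=1_C$, $\alpha g=f$ and $\beta f=g$. Then $g$ is a monomorphism because $\alpha g=f$ is one. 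The identities $\alpha g=f$ and $\beta f=g$ say exactly that $\alpha(\im g)\subseteq\im f$ and $\beta(\im f)\subseteq\im g$, so $\alpha$ and $\beta$ descend to $S$-maps $\o\alpha:C/A\to B/A$ and $\o\beta:B/A\to C/A$ on Rees quotients with $\o\beta\,\o\alpha=1_{C/A}$. Thus $C/A$ is a retract of the flat act $B/A$, hence flat by Lemma~\ref{flat-retract-lemma1}, and $g\in\F-mono$.

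For pushouts, let $g:C\to P$ be the pushout of $f:A\to B\in\F-mono$ along some $u:A\to C$, and write $P\cong(B\dot\cup C)/\rho$ as in the description recalled before Lemma~\ref{box-c-lemma}. A direct inspection of the generating congruence $\rho$, using that $f$ is one-to-one, shows that $g$ is again a monomorphism (pushouts of monomorphisms are monomorphisms in $\text{Act}-S$). The crucial point is the isomorphism $P/C\cong B/A$: the assignment $b\mapsto\o{v(b)}$ is constant on $\im f$, since $vf=gu$ has image inside $\im g$, which is collapsed in $P/C$, and so it factors through an $S$-map $B/A\to P/C$; its inverse is induced by sending $v(b)$ to $\o b$ and every $g(c)$ to the collapsed point. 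As $B/A$ is flat, so is $P/C$, whence $g\in\F-mono$.

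The transfinite composition is where the real work lies. Let $(A_\alpha,\phi_{\alpha,\beta})$ be a $\lambda$-sequence with $\phi_{\beta,\beta+1}\in\F-mono$ for all $\beta$. Since each $\phi_{\alpha,\beta}$ is a monomorphism and directed colimits of $S$-acts preserve monomorphisms, $\phi_{0,\lambda}$ is a monomorphism, so it remains to prove that $A_\lambda/A_0$ is flat. Writing $Q_\beta=A_\beta/A_0$, I would prove by transfinite induction that $Q_\beta$ is flat for every $\beta$ with $1\le\beta\le\lambda$. The base case $\beta=1$ is exactly the hypothesis $\phi_{0,1}\in\F-mono$. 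For a successor $\beta+1$, Lemma~\ref{rees-composition-lemma} provides a monomorphism $Q_\beta\to Q_{\beta+1}$ whose Rees quotient is $(A_{\beta+1}/A_0)/(A_\beta/A_0)\cong A_{\beta+1}/A_\beta$, which is flat; since $Q_\beta$ is flat by induction, flatness of $Q_{\beta+1}$ follows from the standard fact that an extension of a flat act with flat Rees quotient is again flat (see~\cite{renshaw-02}). At a limit ordinal $\gamma\le\lambda$ the Rees quotient $-/A_0$ is a pushout (it collapses the image of $A_0$) and hence preserves the directed colimit, giving $Q_\gamma\cong\mathrm{colim}_{\alpha<\gamma}Q_\alpha$; the terms $Q_\alpha$ with $1\le\alpha<\gamma$ are flat and cofinal, so $Q_\gamma$ is flat because directed colimits of flat acts are flat (see~\cite{bailey-12}). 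Taking $\beta=\lambda$ shows $A_\lambda/A_0$ is flat, so $\phi_{0,\lambda}\in\F-mono$, completing the three closure properties.

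I expect the flat-quotient part of the transfinite step to be the main obstacle. It rests on two genuinely act-theoretic inputs, namely that flatness passes through extensions with flat Rees quotient and that directed colimits preserve flatness, and on getting the induction bookkeeping right: in particular $Q_0=A_0/A_0$ is the one-element act, which need not be flat, so the induction must begin at $\beta=1$ and must exploit that the bottom term is not cofinal at limit stages. The remaining monomorphism checks, together with the retract and pushout arguments, are formal by comparison.
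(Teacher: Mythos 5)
Your proposal is correct and follows essentially the same route as the paper: closure of $\F-mono$ under pushouts (via the isomorphism $P/C\cong B/A$), under transfinite compositions (flat Rees quotients at successor stages, directed colimits of flat acts at limit stages), and under retracts (Lemma~\ref{flat-retract-lemma1} applied to $C/A$ as a retract of $B/A$). The only real difference is cosmetic: at successor stages the paper derives ``$B/A$ and $C/B$ flat implies $C/A$ flat'' by passing through stability (Lemmas~\ref{rees-flat-lemma1} and~\ref{rees-stable-lemma1}), whereas you invoke the equivalent extension fact from~\cite{renshaw-02} directly, and your explicit remark that the bottom term $A_0/A_0$ is the (possibly non-flat) one-element act, so the limit-stage colimit must be taken over the cofinal subsystem starting at $\beta=1$, is a bookkeeping point the paper glosses over.
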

\begin{proof}
We prove this in three steps
\begin{enumerate}
\item Consider the pushout diagram
$$
\begin{tikzpicture}[description/.style={fill=white,inner sep=2pt}]
\matrix (m) [matrix of math nodes, row sep=3em,
column sep=2.5em, text height=1.5ex, text depth=0.25ex]
{A&C\\
 B&P\\ };
\path[->,font=\scriptsize]
(m-1-1) edge node[auto,above] {$u$} (m-1-2)
(m-2-1) edge node[auto,below] {$v$} (m-2-2)
(m-1-2) edge node[auto,right] {$g$} (m-2-2)
(m-1-1) edge node[auto,left] {$f$} (m-2-1);
\end{tikzpicture}
$$
where $f\in\F-$mono.
%From above we see that $P=(B\dot\cup C)/\rho$ and $v(b)=\rho^\natural\iota_B, g(c)=\rho^\natural\iota_C(c)$.
Since pushouts of monomorphisms are monomorphisms then we can
define $h:B/A\to P/C$ by $h(\o b) = \o{v(b)}$. Then $h$ is a well-defined $S-$map since if $\o b=\o{b'}$ then either $b=b'$, in which case $\o{v(b)}=\o{v(b')}$, or $b=f(a), b'=f(a')$ for some $a,a'\in A$. In this case, $\o{v(b)}=\o{gu(a)}=\o{gu(a')}=\o{v(b')}$ as required.

In a similar way, if $\o{v(b)}=\o{v(b')}$ then either $v(b)=v(b')$ or $v(b)=g(c), v(b')=g(c')$ for some $c,c'\in C$. In the latter case, since $P$ is a pushout there exists $a,a'\in A$ such that $b=f(a), b'=f(a')$ and so $\o b=\o{b'}$ in $B/A$. In the former case, from the comments on pushouts at the end of Section 1 it follows that either $b=b'$, in which case $\o b=\o{b'}$, or $b=f(a), b'=f(a')$ for some $a,a'\in A$. In this case, $\o b=\o{f(a)}=\o{f(a')}=\o{b'}$. Consequently we see that $h$ is a monomorphism.

Finally if $p\in P$ then either $p=v(b)$ or $p=g(c)$ for some $b\in B, c\in C$. In the first case $\o p = \o{v(b)} = h(\o b)$. In the second case, for any $a\in A$ we have $\o p = \o{g(c)} = \o{gu(a)} = \o{vf(a)} = h(\o{f(a)})$ and $h$ is an isomorphism.

It therefore follows that $g\in\F-$mono.
\item It follows immediately from Lemmas~\ref{rees-flat-lemma1} and~\ref{rees-stable-lemma1} that the composite of two maps in $\F-$mono is in $\F-$mono. Now suppose that $\lambda$ is a limit ordinal and that for all $i\le j<\lambda$, $f_{ij}:A_i\to A_j$ are such that $f_{ij}\in\F-$mono and that if $j$ is a limit ordinal then $f_{ij}$ is a colimit. Let $(A_\lambda,f_{i\lambda})$ be the directed colimit and note from~\cite[Corollary 3.6]{renshaw-86a} that $f_{i\lambda}$ is a monomorphism. Let $\o{f_{ij}}:A_i/A_0\to A_j/A_0$ be given by $\o{f_{ij}}(\o{a_i}) = \o{f_{ij}(a_i)}$. By~\cite[Lemma 3.6]{renshaw-86} we can deduce that $(A_\lambda/A_0,\o{f_{i\lambda}})$ is the direct limit of $(A_i/A_0,\o{f_{ij}})$ and so by \cite[Corollary 3.5]{renshaw-86} we deduce that $A_\lambda/A_0$ is flat as required.

\item Suppose that If $f:A\to B\in\F-$mono and $g:A\to C$ are such that there exist maps $\alpha:C\to B$ and $\beta:B\to C$ with $\beta\alpha=1_C, \alpha g=f$ and $\beta f=g$. It is clear that retracts of monomorphisms are monomorphisms. Consider the following commutative diagram
$$
\begin{tikzpicture}[description/.style={fill=white,inner sep=2pt}]
\matrix (m) [matrix of math nodes, row sep=3em,
column sep=2.5em, text height=1.5ex, text depth=0.25ex]
{&A\\
 B& C\\
B/A&C/A\\};
\path[->,font=\scriptsize]
(m-1-2) edge node[auto,left] {$f$} (m-2-1)
(m-1-2) edge node[auto,right] {$g$} (m-2-2)
(m-2-1.340) edge[<-] node[auto,below] {$\alpha$} (m-2-2.200)
(m-2-1.20) edge node[auto,above] {$\beta$} (m-2-2.160)
(m-2-1) edge node[auto,right] {$$} (m-3-1)
(m-2-2) edge node[auto,right] {$$} (m-3-2)
(m-3-1.340) edge[<-] node[auto,below] {$\o\alpha$} (m-3-2.200)
(m-3-1.20) edge node[auto,above] {$\o\beta$} (m-3-2.160);
\end{tikzpicture}
$$
where $\o\alpha(\o c) = \o{\alpha(c)}$ and $\o\beta(\o b) = \o{\beta(b)}$. It is easy to see that $\o\alpha$ and $\o\beta$ are well defined $S-$maps and that $\o\beta\o\alpha=1_{C/A}$. Hence from Lemma~\ref{flat-retract-lemma1} we see that $C/A$ is flat and so $g\in\F-$mono.
\end{enumerate}
\end{proof}

\section{Cofibrantly generated systems}

Finding classes of acts that form a weak factorization system seems to be quite a difficult task. However, in certain cases we can `generate' such a system from a given class of $S-$maps. In particular, if $(\L,\R)$ is a weak factorization system such that there exists a {\bf set} $\C$ of $S-$maps with $\L=\cof(\C)$ then we say that the system is {\em cofibrantly generated} by $\C$.

Let $\C$ be a class of morphisms and let $\L={}^\Box\C$ and $\R=\L^\Box$. Then it is easy to see from Lemma~\ref{box-algebra-lemma}(3) that the pair $(\L,\R)$ satisfy $\L={}^\Box\R$ and $\R=\L^\Box$. 

\begin{theorem}[{Cf. \cite[Proposition 1.3]{beke-00}}]\label{cofibration-theorem}
Let $S$ be a monoid and let $\C$ be a {\rm\bf set} of $S-$morphisms closed under coproducts and let $\R=\C^\Box, \L={}^\Box\R$. Then $(\L,\R)$ is a weak factorization system.
\end{theorem}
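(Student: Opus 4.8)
The plan is to run the \emph{small object argument}. First observe that condition~(1) in the definition of a weak factorization system is automatic here: we have $\L={}^\Box\R$ by fiat, while $\L^\Box=\left({}^\Box\R\right)^\Box=\left({}^\Box\left(\C^\Box\right)\right)^\Box=\C^\Box=\R$ by Lemma~\ref{box-algebra-lemma}(3). So the entire content of the theorem is condition~(2): for an arbitrary $S$-map $h:A\to B$ I must produce a factorization $h=gf$ with $f\in\L={}^\Box\left(\C^\Box\right)=\cof(\C)$ and $g\in\R=\C^\Box$.

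First I would fix the bookkeeping cardinal. Because $\C$ is a \textbf{set}, the domains of its members form a set of $S$-acts, so I can choose an infinite cardinal $\gamma\ge|S|$ with $\gamma\ge|C|$ for every domain $C$ of a map in $\C$, and take $\lambda=\gamma^+$, which is $\gamma$-filtered. I then build a $\lambda$-sequence $(A_\alpha,\phi_{\alpha,\beta})$ together with compatible maps $h_\alpha:A_\alpha\to B$ by transfinite recursion, with $A_0=A$ and $h_0=h$. At a successor stage, let $S_\alpha$ be the set (it is a set since $\C$, $A_\alpha$ and $B$ are) of all commutative squares given by some $c_s:C_s\to D_s$ in $\C$ on the left, $u_s:C_s\to A_\alpha$ on top and $v_s:D_s\to B$ on the bottom, with $h_\alpha u_s=v_s c_s$. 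Form the pushout of $\coprod_{s}c_s$ along $\coprod_{s}u_s$ to get $\phi_{\alpha,\alpha+1}:A_\alpha\to A_{\alpha+1}$, and use the $v_s$ together with $h_\alpha$ to induce $h_{\alpha+1}:A_{\alpha+1}\to B$. At limit stages I take directed colimits, which keeps $(A_\alpha,\phi_{\alpha,\beta})$ a $\lambda$-sequence and, by the colimit property, induces the compatible map into $B$. Setting $f=\phi_{0,\lambda}$ and $g=h_\lambda$ gives $h=gf$.

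The membership $f\in\L$ is the easy half and uses the hypothesis that $\C$ is closed under coproducts: each $\coprod_{s}c_s$ then lies in $\C$, so every $\phi_{\alpha,\alpha+1}$ is a pushout of a map in $\C$ and hence lies in $\cof(\C)$ by Lemma~\ref{cof-lemma}(1); since $f$ is the transfinite composition of these maps, Lemma~\ref{cof-lemma}(2) yields $f\in\cof(\C)=\L$.

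The crux is $g=h_\lambda\in\C^\Box=\R$, and this is exactly where the $\gamma$-filtered choice of $\lambda$ and Lemma~\ref{small-lemma} are needed. Given a lifting problem consisting of $c:C\to D\in\C$, $u:C\to A_\lambda$ and $v:D\to B$ with $h_\lambda u=vc$, Lemma~\ref{small-lemma} applies to the domain $C$ (here $\gamma\ge\max\{|S|,|C|\}$) and factors $u$ as $u=\phi_\alpha u'$ for some $\alpha<\lambda$ and $u':C\to A_\alpha$. Using $h_\lambda\phi_\alpha=h_\alpha$ one checks the square with $u'$, $c$, $v$ and $h_\alpha$ commutes, so it is one of the squares indexed by $S_\alpha$; by construction of the pushout $A_{\alpha+1}$ there is a map $w:D\to A_{\alpha+1}$ with $wc=\phi_{\alpha,\alpha+1}u'$ and $h_{\alpha+1}w=v$. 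Then $k=\phi_{\alpha+1}w:D\to A_\lambda$ satisfies $kc=\phi_{\alpha+1}\phi_{\alpha,\alpha+1}u'=\phi_\alpha u'=u$ and $h_\lambda k=h_{\alpha+1}w=v$, which is the required lift. I expect this final step---locating each lifting problem at a bounded stage via the smallness lemma and then verifying the two triangle identities for $k$---to be the main obstacle, the remainder being the formal transfinite machinery already licensed by Lemmas~\ref{cof-lemma} and~\ref{small-lemma}.
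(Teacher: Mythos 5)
Your proposal is correct and follows essentially the same route as the paper: both run the small object argument, using the coproduct-closure of $\C$ to make each successor step a single pushout of a map in $\C$, invoking Lemma~\ref{cof-lemma} to get $f\in\cof(\C)=\L$, and using Lemma~\ref{small-lemma} with a $\gamma$-filtered ordinal to locate each lifting problem against $g$ at a bounded stage. The only cosmetic difference is that the paper separately dispatches the degenerate case where the set of relevant squares is empty, which your construction handles implicitly.
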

\begin{proof}
That the pair $(\L,\R)$ satisfy $\L={}^\Box\R$ and $\R=\L^\Box$ follows from Lemma~\ref{box-algebra-lemma}(3). Notice also that $\C\subseteq\L$.

The following is a modified version of that found in~\cite{beke-00} but also contains ideas found in~\cite{hovey-88}. Let $g:X\to Y$ be a morphism and consider the following construction $(P(g),\theta(g),\phi(g))$. Let $\S$ be the set of all commutative squares
$$
\begin{tikzpicture}[description/.style={fill=white,inner sep=2pt}]
\matrix (m) [matrix of math nodes, row sep=3em,
column sep=2.5em, text height=1.5ex, text depth=0.25ex]
{A&X\\
 B&Y\\ };
\path[->,font=\scriptsize]
(m-1-1) edge node[auto,above] {$u$} (m-1-2)
(m-2-1) edge node[auto,below] {$v$} (m-2-2)
(m-1-2) edge node[auto,right] {$g$} (m-2-2)
(m-1-1) edge node[auto,left] {$f$} (m-2-1);
\end{tikzpicture}
$$
with $f\in\C$. If $\S=\emptyset$ then we have a factorisation of $g$ as $X\stackrel{1_X}{\to}X\stackrel{g}{\to}Y$ and by Lemma~\ref{box-c-lemma}, $1_X\in\L$. It easily follows in that case that $g\in\R$ as required.

Assume now that $\S\ne\emptyset$. Let $\bar f:A_{\S}\to B_{\S}$ be the coproduct of all these maps, let $A_{\S}\to X$ be the natural map induced by the coproduct and consider the pushout diagram
$$
\begin{tikzpicture}[description/.style={fill=white,inner sep=2pt}]
\matrix (m) [matrix of math nodes, row sep=3em,
column sep=2.5em, text height=1.5ex, text depth=0.25ex]
{A_{\S}&X\\
 B_{\S}&P(g)\\ };
\path[->,font=\scriptsize]
(m-1-1) edge node[auto,above] {$$} (m-1-2)
(m-2-1) edge node[auto,below] {$$} (m-2-2)
(m-1-2) edge node[auto,right] {$\theta(g)$} (m-2-2)
(m-1-1) edge node[auto,left] {$\bar f$} (m-2-1);
\end{tikzpicture}
$$
By the pushout property, there exists a unique map $\phi(g):P(g)\to Y$ such that the diagram
$$
\begin{tikzpicture}[description/.style={fill=white,inner sep=2pt}]
\matrix (m) [matrix of math nodes, row sep=3em,
column sep=2.5em, text height=1.5ex, text depth=0.25ex]
{A_{\S}&X&\\
 B_{\S}&P(g)&\\ 
&&Y\\};
\path[->,font=\scriptsize]
(m-1-2) edge node[auto,right] {$g$} (m-3-3)
(m-2-1) edge node[auto,left] {$$} (m-3-3)
(m-2-2) edge node[auto,left] {$\phi(g)$} (m-3-3)
(m-1-1) edge node[auto,above] {$$} (m-1-2)
(m-2-1) edge node[auto,below] {$$} (m-2-2)
(m-1-2) edge node[auto,left] {$\theta(g)$} (m-2-2)
(m-1-1) edge node[auto,left] {$\bar f$} (m-2-1);
\end{tikzpicture}
$$
commutes. So $g$ can be factorized as $g=\phi(g)\theta(g)$. For notational convenience let $P_0(g)=P(g), \phi_0(g)=\phi(g), \theta_0(g)=\theta(g)$.

By way of transfinite induction suppose that for all ordinals $\alpha < \epsilon$ we have an $S-$act $P_\alpha(g)$ and $S-$maps $\theta_\alpha(g):X\to P_\alpha(g)$ and $\phi_\alpha(g):P_\alpha(g)\to Y$ such that $g=\phi_\alpha(g)\theta_\alpha(g)$ and suppose also that for all ordinals $\beta<\gamma<\delta<\epsilon$ there are $S-$maps $\psi_{\beta,\gamma}:P_\beta(g)\to P_\gamma(g), \psi_{\gamma,\delta}:P_\gamma(g)\to P_\delta(g), \psi_{\beta,\delta}:P_\beta(g)\to P_\delta(g)$ such that $\psi_{\gamma,\delta}\psi_{\beta,\gamma} = \psi_{\beta,\delta}$. If $\epsilon=\alpha+1$ then we let $P_{\alpha+1}(g) = P(\phi_\alpha(g)), \theta_{\alpha+1}(g) = \theta(\phi_\alpha(g))\theta_\alpha(g)$ and $\phi_{\alpha+1}(g)=\phi(\phi_\alpha(g))$ and we define $\psi_{\beta,\alpha+1} = \theta_{\alpha+1}(g)\psi_{\beta,\alpha}$ for all $\beta\le\alpha$.

If $\epsilon$ is a limit ordinal we construct $(P_\epsilon(g),\theta_\epsilon(g),\phi_\epsilon(g))$ by directed colimits. Specifically we have a directed system of $S-$acts $(P_\alpha(g),\psi_{\alpha,\beta})$ with directed colimit $(P_\epsilon(g),\psi_{\alpha,\epsilon})$
$$
\begin{tikzpicture}[description/.style={fill=white,inner sep=2pt}]
\matrix (m) [matrix of math nodes, row sep=3em,
column sep=2.5em, text height=1.5ex, text depth=0.25ex]
{P_\alpha(g)&&&&P_{\beta}(g)\\
&&P_\epsilon(g)&&\\ 
&&Y&&\\};
\path[->,font=\scriptsize]
(m-1-1) edge node[auto,left] {$\phi_\alpha(g)$} (m-3-3)
(m-1-5) edge node[auto,right] {$\phi_{\beta}(g)$} (m-3-3)
(m-2-3) edge node[auto,above left] {$\phi_\epsilon(g)$} (m-3-3)
(m-1-1) edge node[auto,above] {$\psi_{\alpha,\beta}$} (m-1-5)
(m-1-1) edge node[auto,right] {$\psi_{\alpha,\epsilon}$} (m-2-3)
(m-1-5) edge node[auto,left] {$\psi_{\beta,\epsilon}$} (m-2-3);
\end{tikzpicture}
$$
Let $\theta_\epsilon(g) = \psi_{0,\epsilon}\theta_0(g):X\to P_\epsilon(g)$ and $\phi_\epsilon(g) : P_\epsilon(g)\to Y$ be as above.

%(change the below so that $\kappa$ is a $\gamma-$filtered ordinal where $\gamma$ is a cardinal larger than the cardinalities of all the domains in $\C$.)

Let $\gamma$ be a cardinal such that $\gamma =\sup\{|\dom(f)||S|: f\in\C\}$  and let $\kappa$ be a $\gamma-$filtered ordinal.

Now
%let $\kappa$ be a cardinal larger than the cardinalities of all the domains in $\C$ and
consider the factorisation $g = \phi_\kappa(g)\theta_\kappa(g) : X\to P_\kappa(g)\to Y$ with $\kappa$ as above. From Lemma~\ref{cof-lemma} we see that $\theta_\kappa(g)\in\cof(\C)=\L$. Suppose now that
$$
\begin{tikzpicture}[description/.style={fill=white,inner sep=2pt}]
\matrix (m) [matrix of math nodes, row sep=3em,
column sep=2.5em, text height=1.5ex, text depth=0.25ex]
{A&P_\kappa(g)\\
 B&Y\\ };
\path[->,font=\scriptsize]
(m-1-1) edge node[auto,above] {$u$} (m-1-2)
(m-2-1) edge node[auto,below] {$v$} (m-2-2)
(m-1-2) edge node[auto,right] {$\phi_\kappa(g)$} (m-2-2)
(m-1-1) edge node[auto,left] {$f$} (m-2-1);
\end{tikzpicture}
$$
is a commutative diagram with $f\in\C$.

By Lemma~\ref{small-lemma}, there exists $\delta<\kappa$ and $u':A\to P_\delta(g)$ such that $u=\psi_{\delta,\kappa}u'$. But then by construction there exists $v':B\to P_{\delta+1}(g)$ and
$$
\begin{tikzpicture}[description/.style={fill=white,inner sep=2pt}]
\matrix (m) [matrix of math nodes, row sep=3em,
column sep=2.5em, text height=1.5ex, text depth=0.25ex]
{A&&P_\kappa(g)\\
 B&&Y\\ };
\path[->,font=\scriptsize]
(m-2-1) edge node[auto,above] {$\psi_{\delta+1,\kappa}v'\ \ \ \ $} (m-1-3)
(m-1-1) edge node[auto,above] {$u$} (m-1-3)
(m-2-1) edge node[auto,below] {$v$} (m-2-3)
(m-1-3) edge node[auto,right] {$\phi_\kappa(g)$} (m-2-3)
(m-1-1) edge node[auto,left] {$f$} (m-2-1);
\end{tikzpicture}
$$
the required `lifting' diagram so that $\phi_\kappa(g)\in\R$.
\end{proof}

Notice that in the preceding proposition, $\L=\cof(\C)$ and $\R=\left(\cof(\C)\right)^\Box$.

\begin{corollary}
Let $S$ be a monoid and let $\C$ be a {\rm\bf set} of $S-$morphisms closed under coproducts. Then $\cof(\C)=\ret(\C)$ and $(\ret(\C),\C^\Box)$ is a weak factorization system cofibrantly generated by $\C$.
\end{corollary}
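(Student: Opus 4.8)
The plan is to deduce this from Theorem~\ref{cofibration-theorem} and the explicit factorization built in its proof. From that theorem, together with the remark immediately following it, we already know that $(\cof(\C),\C^\Box)$ is a weak factorization system with $\L=\cof(\C)={}^\Box(\C^\Box)$ and $\R=\C^\Box$. Moreover Lemma~\ref{cof-lemma} gives the inclusion $\ret(\C)\subseteq\cof(\C)$. Hence the whole corollary reduces to proving the reverse inclusion $\cof(\C)\subseteq\ret(\C)$: once we have $\cof(\C)=\ret(\C)$, the pair $(\ret(\C),\C^\Box)=(\cof(\C),\C^\Box)$ is a weak factorization system, and it is cofibrantly generated by $\C$ precisely because $\L=\cof(\C)$.

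To establish $\cof(\C)\subseteq\ret(\C)$ I would run the standard \emph{retract argument}. Let $f:A\to B$ be any map in $\cof(\C)$. Applying the small object construction of Theorem~\ref{cofibration-theorem} to $f$ produces a factorization $f=\phi\theta$, where $\theta=\theta_\kappa(f):A\to P_\kappa(f)$ and $\phi=\phi_\kappa(f):P_\kappa(f)\to B$ with $\phi\in\C^\Box=\R$. The key point is that $\theta$ is, by construction, a transfinite composition of pushouts of maps in $\C$: the initial step $A\to P_0(f)$ is a pushout of the coproduct map $\bar f\in\C$, each successor map $P_\alpha(f)\to P_{\alpha+1}(f)$ is again such a pushout, and limit stages are directed colimits. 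Thus $\theta$ already belongs to $\ret(\C)$.

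I would then invoke the lifting property. Since $f\in\L={}^\Box\R$ and $\phi\in\R$, the map $f$ has the left lifting property with respect to $\phi$. Applied to the commutative square whose top edge is $\theta$, left edge is $f$, right edge is $\phi$ and bottom edge is $1_B$ (this square commutes because $\phi\theta=f$), this yields a map $h:B\to P_\kappa(f)$ with $hf=\theta$ and $\phi h=1_B$. Taking $\alpha=h$ and $\beta=\phi$, the three identities $\beta\alpha=1_B$, $\alpha f=\theta$ and $\beta\theta=f$ hold, which is exactly the assertion that $f$ is a retract of $\theta$ in the sense defined before Lemma~\ref{cof-lemma}. Since $\theta\in\ret(\C)$ and a retract of a transfinite composition of pushouts of maps in $\C$ is by definition in $\ret(\C)$, we conclude $f\in\ret(\C)$, completing the inclusion.

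The argument is essentially formal, and I do not anticipate a genuine obstacle, since all three ingredients---the factorization, the lifting property, and the description of $\ret(\C)$---are already to hand. The only point demanding care is bookkeeping: matching the direction of the retract to the paper's convention, so that $f$ is exhibited as a retract of $\theta$ (rather than conversely) and the roles of $\alpha,\beta$ together with the identities $\beta\alpha=1_B$, $\alpha f=\theta$, $\beta\theta=f$ line up correctly.
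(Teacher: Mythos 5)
Your proposal is correct and follows essentially the same route as the paper: both use the small object factorization $f=\phi\theta$ from Theorem~\ref{cofibration-theorem} together with the lifting of $f\in{}^\Box(\C^\Box)$ against $\phi\in\C^\Box$ in the square with bottom edge $1_B$ to exhibit $f$ as a retract of the transfinite composition $\theta$. Your bookkeeping of the retract data $\alpha=h$, $\beta=\phi$ matches the paper's conventions.
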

\begin{proof}
One way round follows immediately from Lemma~\ref{cof-lemma}. Suppose then that $f:A\to B\in\cof(\C)$. By (the proof of) Proposition~\ref{cofibration-theorem} it follows that $f=gf'$ with $f'$ a transfinite composition of pushouts of maps in $\C$ and $g:C\to B\in\left(\cof(\C)\right)^\Box$. We then have a diagram
$$
\begin{tikzpicture}[description/.style={fill=white,inner sep=2pt}]
\matrix (m) [matrix of math nodes, row sep=3em,
column sep=2.5em, text height=1.5ex, text depth=0.25ex]
{A&&C\\
 B&&B\\ };
\path[->,font=\scriptsize]
(m-2-1) edge node[auto,above] {$h$} (m-1-3)
(m-1-1) edge node[auto,above] {$f'$} (m-1-3)
(m-2-1) edge node[auto,below] {$1_B$} (m-2-3)
(m-1-3) edge node[auto,right] {$g$} (m-2-3)
(m-1-1) edge node[auto,left] {$f$} (m-2-1);
\end{tikzpicture}
$$
and so $f$ is a retract of $f'$ as required.
\end{proof}

\bigskip

\section{Centred $S-$acts}

Suppose that $S$ is now a monoid with a zero and that all acts are {\em centred}, that is to say they contain a unique fixed point. Let the 1-element $S-$act here be denoted by $0$.

Suppose in addition that $\X$ is a given class of centred $S-$acts and $(\L,\R)$ is a weak factorization system for the category of centred $S-$acts with the property that $0\to X\in\L$ if and only if $X\in\X$. Then given a centred $S-$act $A$ the unique map $0\to A$ factorises as $0\to A^\ast\to A$ with $0\to A^\ast \in\L$ and $A^\ast\to A\in\R$. It follows by our assumptions that  $A^\ast\in\X$.

Suppose then that $X\in\X$ is a centred right $S-$act and suppose that $X\to A$ is an $S-$map. Then we can construct the commutative diagram
$$
\begin{tikzpicture}[description/.style={fill=white,inner sep=2pt}]
\matrix (m) [matrix of math nodes, row sep=3em,
column sep=2.5em, text height=1.5ex, text depth=0.25ex]
{0&&A^\ast\\
 X&&A\\ };
\path[->,font=\scriptsize]
(m-2-1) edge node[auto,above] {$$} (m-1-3)
(m-1-1) edge node[auto,above] {$$} (m-1-3)
(m-2-1) edge node[auto,below] {$$} (m-2-3)
(m-1-3) edge node[auto,right] {$$} (m-2-3)
(m-1-1) edge node[auto,left] {$$} (m-2-1);
\end{tikzpicture}
$$
where the diagonal map comes from the weak factorization system. This demonstrates that every centred $S-$act has an $\X-$precover in the class of centred $S-$acts.

\medskip

It is easy to demonstrate that $0\to X\in\F-mono$ if and only if $X$ is flat and that $\UPr\subseteq\F-mono$ and so $\fib(\F-mono)\subseteq\E$. However so far we have been unable to show that $\F-mono$ is not only saturated but also cofibrantly generated.

\smallskip

In the case of modules over a unitary ring, it can indeed be shown that $\F-mono$ is cofibrantly generated but the proof seems to depend on the additive structure of the category of modules. We would however like to conjecture that this result is also true for the category of centred acts over a monoid with zero. Little however has been written on the homological aspects of the category of centred $S-$acts.

\end{document}